\documentclass[11pt]{amsart}

\usepackage{amsmath,amssymb,latexsym,soul,cite,mathrsfs}
\usepackage{color,enumitem,graphicx}
\usepackage[colorlinks=true,urlcolor=blue,
citecolor=red,linkcolor=blue,linktocpage,pdfpagelabels,
bookmarksnumbered,bookmarksopen]{hyperref}
\usepackage[english]{babel}

\usepackage[left=2.9cm,right=2.9cm,top=2.8cm,bottom=2.8cm]{geometry}
\usepackage[hyperpageref]{backref}

\usepackage[colorinlistoftodos]{todonotes}
\makeatletter
\providecommand\@dotsep{5}
\def\listtodoname{List of Todos}
\def\listoftodos{\@starttoc{tdo}\listtodoname}
\makeatother

\numberwithin{equation}{section}





\newtheorem{theorem}{Theorem}[section]
\newtheorem{proposition}[theorem]{Proposition}
\newtheorem{lemma}[theorem]{Lemma}
\newtheorem{corollary}[theorem]{Corollary}

\newtheorem{claim}[theorem]{Claim}

\newtheorem{remark}{Remark}

\begin{document}

\title[Existence of solution for a class of quasilinear problem in ......]{Existence of solution for a class of quasilinear problem in Orlicz-Sobolev space without $\Delta_2$-condition}

\author{Claudianor O. Alves}
\author{Edcarlos D. Silva }
\author{ Marcos T. O. Pimenta }

\address[Claudianor O. Alves]{\newline\indent Unidade Acad\^emica de Matem\'atica
\newline\indent
Universidade Federal de Campina Grande,
\newline\indent
58429-970, Campina Grande - PB - Brazil}
\email{\href{mailto:coalves@dme.ufcg.edu.br}{coalves@dme.ufcg.edu.br}}

\address[Edcarlos D. Silva]
{\newline\indent Intituto de Matem\'atica e Estat\'stica
\newline\indent
Universidade Federal de Goi\'as
\newline\indent
74001-970, Goi\^ania - GO, Brazil}
\email{\href{mailto:edcarlos@ufg.br}{edcarlos@ufg.br}}

\address[ Marcos T. O. Pimenta]{\newline\indent
Departamento de Matem\'atica e Computa\c{c}\~ao
\newline\indent
 Universidade Estadual Paulista (Unesp), Faculdade de Ci\^encias e Tecnologia
\newline\indent
19060-900 - Presidente Prudente - SP, Brazil }
\email{\href{mailto:pimenta@fct.unesp.br}{pimenta@fct.unesp.br}}

\pretolerance10000


\begin{abstract}
\noindent In this paper we study existence of solution for a class of problem of the type
$$
\left\{
\begin{array}{ll}
-\Delta_{\Phi}{u}=f(u), \quad \mbox{in} \quad \Omega \\
u=0, \quad \mbox{on} \quad \partial \Omega,
\end{array}
\right.
$$
where $\Omega \subset \mathbb{R}^N$, $N \geq 2$, is a smooth bounded domain, $f:\mathbb{R} \to \mathbb{R}$ is a continuous function verifying some conditions, and $\Phi:\mathbb{R} \to \mathbb{R}$ is a N-function which is not assumed to satisfy the well known $\Delta_2$-condition, then the Orlicz-Sobolev space $W^{1,\Phi}_0(\Omega)$ can be non reflexive. As main model we have the function $\Phi(t)=(e^{t^{2}}-1)/2$.  Here, we study some situations where it is possible to work with global minimization, local minimization and mountain pass theorem, however some estimates are not standard for this type of problem. 
\end{abstract}

\thanks{Claudianor Alves was partially supported by CNPq/Brazil Proc. 304036/2013-7 ; Edcarlos Silva was partially supported by
	CNPq, Brazil,  Marcos Pimenta was partially supported by FAPESP and CNPq 442520/2014-0, Brazil.  }
\subjclass[2010]{35A15, 35J62, 46E30}
\keywords{Orlicz-Sobolev spaces, Variational Methods, Quasilinear problems}

\maketitle

\section{Introduction}

In this paper we study existence of weak solution for a class of quasilinear problem of the type
$$
\left\{
\begin{array}{ll}
-\Delta_{\Phi}{u}=f(u), \quad \mbox{in} \quad \Omega, \\
u=0, \quad \mbox{on} \quad \partial \Omega,
\end{array}
\right.
\leqno{(P)}
$$
where $\Omega \subset \mathbb{R}^N$, $N \geq 2$, is a smooth bounded domain, $f:\mathbb{R} \to \mathbb{R}$ is a continuous function verifying some conditions which will be mentioned later on, and 
$$
\Delta_\Phi u = \mbox{div}(\phi(|\nabla u|\nabla u)
$$
where $\Phi:\mathbb{R} \to \mathbb{R}$ is a N-function of the form
$$
\Phi(t)=\int_{0}^{|t|}s\phi(s) \,ds
$$
and $\phi:[0, +\infty) \to [0, +\infty) $ is a $C^{1}$ function verifying the following conditions
$$
t \mapsto t\phi(t); \quad t>0 \;\; \quad \mbox{increasing} \quad \mbox{and} \quad t \mapsto t^2\phi(t) \quad \mbox{is convex in } \quad \mathbb{R}. \leqno{(\phi_1)}
$$
$$
\lim_{t \to 0}t\phi(t)=0, \quad  \lim_{t \to +\infty}t\phi(t)=+\infty. \leqno{(\phi_2)}
$$
$$
t \mapsto \frac{t^{2}{\phi(t)}}{\Phi(t)}, \quad \mbox{ is increasing for } \quad t>0 \quad \mbox{with} \quad
\frac{t^{2}{\phi(t)}}{\Phi(t)} \geq l >1, \quad \forall t >0. \leqno{(\phi_3)}
$$
for some $l>1$. 
$$
\frac{t^{2}\phi(t)}{\Phi(t)} \leq 1+\frac{t\phi'(t)}{\phi(t)} \leq \frac{2t^{2}\phi(t)}{\Phi(t)}, \quad \forall t >0. \leqno{(\phi_4)}
$$
If $d$ is twice the diameter of $\Omega$, then
$$
\limsup_{t \to 0^+}\frac{\Phi(t)}{\Phi(t/d)}<+\infty. \leqno{(\phi_5)}
$$
For all $A,B, q>0$ with $A/B <q$, we have
$$
\lim_{t \to +\infty}\frac{(\Phi(Bt))^{q}}{\Phi(At)}=+\infty. \leqno{(\phi_6)}
$$ 
$$
\liminf_{t \to +\infty}\frac{\Phi(t)}{t^{p}}>0, \quad \mbox{for some} \quad p > N. \leqno{(\phi_7)}
$$
The last assumption implies that the embedding
$$
W^{1,\Phi}_0(\Omega) \hookrightarrow W^{1,p}(\Omega) \quad \mbox{for some} \quad p>N
$$
is continuous. Hence, by Sobolev embedding, the  embedding
\begin{equation} \label{I0}
W^{1,\Phi}_0(\Omega)  \hookrightarrow C^{0,\alpha}(\overline{\Omega})
\end{equation}
is continuous for some $\alpha \in (0,1)$ and
\begin{equation} \label{I1}
W^{1,\Phi}_0(\Omega)  \hookrightarrow C(\overline{\Omega})
\end{equation}
is compact. The condition $(\phi_7)$ also implies that there is $C>0$ such that
$$
\|u\|_{W^{1,p}(\Omega)} \leq C\left(\int_{\Omega}\Phi(|\nabla u|)\,dx \right)^{\frac{1}{p}}, \quad \forall u \in W_0^{1,\Phi}(\Omega).
$$
From this, 
\begin{equation} \label{I1.1}
\|u\|_{C(\overline{\Omega})} \leq  C\left(\int_{\Omega}\Phi(|\nabla u|)\,dx \right)^{\frac{1}{p}}, \quad \forall u \in W_0^{1,\Phi}(\Omega).
\end{equation}
for some $C>0$.

Before continuing this section, we would like to point out that $\Phi(t)={(e^{t^2}-1)}/{2}$ and $\Phi(t)=|t|^{p}/p$ with $p>N$ satisfy $(\phi_1)-(\phi_7)$. Moreover, we would like to recall that $u \in W^{1,\Phi}_0(\Omega)$ is a weak solution of $(P)$ if
$$
\int_{\Omega}\phi(|\nabla u|)\nabla u \nabla v\,dx=\int_{\Omega}f(u)v\,dx, \quad \forall v \in W^{1,\Phi}_0(\Omega).
$$

Quasilinear elliptic problem have been considered using different assumptions on the N-function $\Phi$. Here we refer the reader to \cite{chung,JVMLED,MR1,MR2,MRep,r.R ,MugnaiPapageorgiou,fang} and references therein. In these works was considered the $\Delta_{2}$-condition which implies that
the Orlicz-Sobolev space $W^{1,\Phi}_{0}(\Omega)$ is a reflexive Banach space. This is used in order to get a nontrivial solution for elliptic problems
taking into account the weak topology.  In our work the main feature is to consider problem $(P)$ where the function $\Phi$ is not assumed to verify the $\Delta_{2}$-condition, then we cannot use that $W^{1,\Phi}_{0}(\Omega)$ is reflexive which brings serious difficulty to apply variational methods.  To overcome this difficulty, we apply the weak$^{\star}$ topology recovering the compactness required in variational methods. We would like to recall that $\Phi(t)=|t|^{p}/p$ for $p>1$ satisfies the $\Delta_2$-condition, while $\Phi(t)=(e^{t^{2}}-1)/2$ does not verify the $\Delta_2$-condition. For more details involving the $\Delta_2$-condition see Section 2.

In \cite{GKMS}, Garc\'ia-Huidobro, Khoi, Man\'asevich and K. Schmitt have studied the existence of solution for the following nonlinear eigenvalue problem
$$
\left\{
\begin{array}{ll}
-\Delta_{\Phi}{u}=\lambda \Psi (u), \quad \mbox{in} \quad \Omega \\
u=0, \quad \mbox{on} \quad \partial \Omega,
\end{array}
\right. \leqno{(P_2)}
$$
where $\Phi:\mathbb{R} \to \mathbb{R}$ is a N-function and $\Psi:\mathbb{R} \to \mathbb{R}$ is a continuous function   verifying some technical conditions. In that paper, the authors have considered the situation where the function $\Phi$ does not satisfy the well known $\Delta_2$-condition, for example, in the first part of this paper the authors considered the function
$$
\Phi(t)=e^{t^{2}}-1, \quad \forall t \in \mathbb{R}.
$$
After in  \cite{BM}, Bocea and  Mih\u{a}ilescu  made a careful study about the eigenvalues of the problem
$$
\left\{
\begin{array}{ll}
-div(e^{|\nabla u|^{2}}\nabla u)-\Delta u=\lambda u, \quad \mbox{in} \quad \Omega \\
u=0, \quad \mbox{on} \quad \partial \Omega.
\end{array}
\right. \leqno{(P_3)}
$$

Recently, da Silva, Gon\c calves and  Silva \cite{EGS} have studied the existence of multiple solutions for $(P_3)$. In their paper the $\Delta_2$-condition is not also assumed and the main tool used was the truncation of the nonlinearity and minimization of the energy functional associated to the quasilinear elliptic problem $(P)$.

The present paper was motivated by results found in \cite{BM} and \cite{GKMS} which can be applied for a class of quasilinear problems where the operator can be driven by N-function with exponential growth. Our first result uses the mountain pass theorem and we assume that $f:\mathbb{R} \to \mathbb{R}$ is a continuous function satisfying the following conditions:
$$
\lim_{t \to 0}\frac{F(t)}{\Phi(t)}=0, \leqno{(f_1)}
$$
where $F(t) = \int_0^t f(s)ds$.

There are $\theta>1, R > 0$ in such way that
$$
0<\theta F(t) \leq h(t)f(t)t, \quad |t| \geq R \leqno{(f_2)}
$$
holds true with $h(t)=\frac{\Phi(t)}{t^{2}\phi(t)}$ .

The condition $(f_2)$ suggests that $F$ is $\Phi$-superlinear, that is, the limit below holds
\begin{equation} \label{SUPER}
\lim_{|t| \to +\infty}\frac{F(t)}{\Phi(t)}=+\infty.
\end{equation}

In fact, by fixing $M>R>0$ and integrating the sentence
$$
\theta \frac{t\phi(t)}{\Phi(t)} \leq \frac{f(t)}{F(t)}\quad t \geq M >R
$$
we deduce that
\begin{equation} \label{E0}
 \frac{F(t)}{\Phi(t)} \geq \frac{F(M)}{\Phi(M)^\theta}\Phi(t)^{\theta -1} \to +\infty \quad \mbox{as} \quad t \to +\infty.
\end{equation}
A similar argument works to prove that
$$
\frac{F(t)}{\Phi(t)} \to +\infty \quad \mbox{as} \quad t \to -\infty.
$$
Here, we would like to point out that $f(t)=\frac{d}{dt}(\Phi(t))^{q}$, for $q>1$, satisfies the conditions  $(f_1)-(f_2)$, because in this case
$$
F(t)=(\Phi(t))^{q}, \quad \forall t \in \mathbb{R}.
$$

\vspace{0.5 cm}

Our first theorem is the following:

\begin{theorem} \label{T1} Assume that $(\phi_1)-(\phi_7)$ and $(f_1)-(f_2)$ hold. Then, there is $\theta^*>0$ such that if $\theta$ as in $(f_2)$ verifies $\theta > \theta^*$ the problem $(P)$ has a nontrivial solution.
\end{theorem}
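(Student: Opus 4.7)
The plan is to apply the Cerami version of the mountain pass theorem to the energy functional
\[ J(u) = \int_{\Omega}\Phi(|\nabla u|)\,dx - \int_{\Omega}F(u)\,dx \]
on $W^{1,\Phi}_{0}(\Omega)$, overcoming the non-reflexivity by combining the compact embedding $\eqref{I1}$ with a weak-$\ast$/monotonicity argument for the quasilinear principal part.

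For the mountain-pass geometry, $(f_{1})$ yields $F(t)=o(\Phi(t))$ as $t\to 0$, while $\eqref{I1.1}$ guarantees that $\|u\|_{C(\overline{\Omega})}$ is small whenever $\int_{\Omega}\Phi(|\nabla u|)\,dx$ is small. Together they produce $\int_{\Omega}F(u)\,dx \leq \tfrac{1}{2}\int_{\Omega}\Phi(|\nabla u|)\,dx$ on a small modular sphere, hence $J(u)\geq \alpha > 0$ there; the role of $(\phi_{5})$ is to transfer this modular estimate to a genuine Luxemburg-norm sphere, since without $\Delta_{2}$ modular and norm are not comparable in a homogeneous way near zero. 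For the "far" condition I fix $u_{0}\geq 0$, $u_{0}\not\equiv 0$, and combine the superlinearity $\eqref{SUPER}$ with the scaling inequality $(\phi_{6})$ to conclude that $J(tu_{0})\to -\infty$ as $t\to\infty$.

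Next, let $\{u_{n}\}$ be a Cerami sequence at the level $c>0$, so $J(u_{n})\to c$ and $(1+\|u_{n}\|)\|J'(u_{n})\|\to 0$. Using $(\phi_{3})$ in the form $\phi(t)t^{2}\geq \ell\,\Phi(t)$ together with $(f_{2})$, which since $h(t)\leq 1/\ell$ delivers $f(u)u\geq \ell\theta F(u)$ for $|u|\geq R$, I would form a weighted combination of $J(u_{n})$ and $J'(u_{n})u_{n}$ (and possibly a test against the auxiliary quantity $h(u_{n})u_{n}$, which forces $(\phi_{4})$ to enter via $(t^{2}\phi(t))'/\Phi'(t)\leq 1+2t^{2}\phi(t)/\Phi(t)$) to isolate
\[ \int_{\Omega}\Phi(|\nabla u_{n}|)\,dx \leq C. \]
The algebra forces $\theta$ to exceed an explicit threshold $\theta^{\ast}=\theta^{\ast}(\ell)$ so that the coefficient of the gradient modular is strictly positive; this is exactly where "$\theta>\theta^{\ast}$" enters the statement. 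Combined with $(\phi_{7})$ and $\eqref{I1.1}$, this yields boundedness of $\|u_{n}\|$ in $W^{1,\Phi}_{0}(\Omega)$, and the compact embedding $\eqref{I1}$ then gives $u_{n}\to u$ in $C(\overline{\Omega})$ along a subsequence, so $f(u_{n})\to f(u)$ uniformly.

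Finally, the quasilinear term $\phi(|\nabla u_{n}|)\nabla u_{n}$ is bounded in the conjugate Orlicz space and, by the weak-$\ast$ duality structure recalled in Section~2, admits a weak-$\ast$ convergent subsequence to some $w$. The identification $w=\phi(|\nabla u|)\nabla u$ is obtained from the monotonicity of $\xi\mapsto \phi(|\xi|)\xi$ (a consequence of the convexity of $t\mapsto t^{2}\phi(t)$ from $(\phi_{1})$) by testing $\langle J'(u_{n}),u_{n}-v\rangle\to 0$ against arbitrary smooth $v$ and using the uniform convergence of $u_{n}$. This yields $J'(u)=0$, and $J(u)=c>0=J(0)$ forces $u\neq 0$. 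The main obstacle is the quantitative determination of $\theta^{\ast}$: without $\Delta_{2}$ the standard trick $\theta J(u_{n})-J'(u_{n})u_{n}$ does not directly bound the gradient modular because $t^{2}\phi(t)/\Phi(t)$ admits no upper bound, so $(\phi_{3})$ and $(\phi_{4})$ must be exploited simultaneously; a secondary obstacle is the weak-$\ast$ identification of the quasilinear limit, where non-reflexivity rules out the standard weak-convergence arguments.
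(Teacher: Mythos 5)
The central gap is that your argument treats the energy functional as if it were $C^{1}$, speaking of Cerami sequences with $(1+\|u_{n}\|)\|J'(u_{n}\|)\to 0$ and of testing $\langle J'(u_{n}),u_{n}-v\rangle$ against smooth $v$. Without the $\Delta_{2}$-condition the modular $Q(u)=\int_{\Omega}\Phi(|\nabla u|)\,dx$ equals $+\infty$ on part of $W^{1,\Phi}_{0}(\Omega)$ and the functional is not differentiable, so $J'(u_{n})$ is simply not available as an element of the dual. The paper's proof is built entirely on Szulkin's minimax theory for $I=Q-J$ with $Q$ convex and weak-$*$ l.s.c.: a critical point means $0\in\partial I(u)$, a (PS) sequence satisfies only the one-sided inequality $Q(v)-Q(u_{n})\geq \int_{\Omega}f(u_{n})(v-u_{n})\,dx-\tau_{n}\|v-u_{n}\|$, and one must then prove (Lemma \ref{Lema0} and the final part of Proposition \ref{Lema2}) that the limit $u$ belongs to $\text{dom}(\phi(t)t)$, i.e.\ $\phi(|\nabla u|)|\nabla u|\in L^{\Phi^{*}}(\Omega)$, before $0\in\partial I(u)$ can be upgraded to the weak formulation of $(P)$. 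Your Minty-type identification of the weak-$*$ limit of $\phi(|\nabla u_{n}|)\nabla u_{n}$ presupposes both the existence of $J'(u_{n})$ and an a priori bound on $\phi(|\nabla u_{n}|)\nabla u_{n}$ in $L^{\Phi^{*}}(\Omega)$ that you never establish; the paper instead passes to the limit directly in the subdifferential inequality using the weak-$*$ lower semicontinuity of $Q$ and the uniform convergence $u_{n}\to u$.

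Second, you locate the threshold $\theta^{*}$ in the wrong place. In the paper, boundedness of (PS) sequences holds for any $\theta>1$: one tests with $v_{n}=h(u_{n})u_{n}=\Phi(u_{n})/(u_{n}\phi(u_{n}))$ (the ``auxiliary quantity'' you mention only as a possibility), for which $(\phi_{4})$ gives $|\nabla v_{n}|\leq|\nabla u_{n}|$ and the sign condition $S(t)\leq 0$, so the gradient modular survives with coefficient $1$ and no competition between $\theta$ and $\sup t^{2}\phi(t)/\Phi(t)$ ever arises. The hypothesis $\theta>\theta^{*}$ is consumed exclusively in the mountain-pass geometry: to get $I(t\Psi)\to-\infty$ one applies $(\phi_{6})$ with $q=\theta$ and needs $A/B<\theta$, where $A=|\nabla\Psi|_{\infty}$ and $B=\inf_{\Omega_{0}}\Psi$ for the chosen test function $\Psi$; this ratio is what defines $\theta^{*}$. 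Your sketch of the ``far'' condition invokes $(\phi_{6})$ but does not record this, while your compactness step promises an ``explicit threshold $\theta^{*}(\ell)$'' from an algebraic balance that, as you yourself observe, cannot close because $t^{2}\phi(t)/\Phi(t)$ is unbounded. As written, neither the compactness argument nor the passage from critical point to weak solution is complete.
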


To the best our knowledge the Theorem \ref{T1} is the first existence result for a class of quasilinear problem driven by a N-function with exponential growth by using the mountain pass theorem. Here, we have had serious difficulty in order to find a correct definition for the Ambrosetti-Rabinowitz condition for nonlinearity $f$, which makes the result interesting.

Our second result involves the existence of solution for a situation where the energy functional has a global minimum.  For this case, we assume the following conditions on $f$:
$$
0\leq F(t) \leq b_1(\Phi(t/d))^{s}, \quad \forall t \in \mathbb{R} \quad \mbox{and for some} \quad s \in (0,1). \leqno{(f_3)} 
$$
and
$$
F(t) \geq c_1(\Phi(t))^{\gamma}, \quad \forall t \in (0, \delta) \quad \mbox{for some} \quad \gamma \in (0,1) \quad \mbox{and} \quad \delta>0. \leqno{(f_4)}
$$

Related to $\Phi$, we assume that for any $A,B>0$
$$
\lim_{t \to 0}\frac{(\Phi(Bt))^{\gamma}}{\Phi(At)}=+\infty. \leqno{(\phi_{8})}
$$
where $\gamma$ is like in $(f_4)$.

The reader is invited to see that $\Phi(t)=(e^{t^{2}}-1)/2$ and $\Phi(t)=|t|^{p}/p$ for $p>N$ also satisfy $(\phi_{8})$.

\vspace{0.5 cm}

Our second result has the following statement

\begin{theorem} \label{T2} Assume $(f_3)-(f_4)$, $(\phi_1)-(\phi_2)$ and $(\phi_{8})$. Then, problem $(P)$ has a nontrivial solution.
\end{theorem}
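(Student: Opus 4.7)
The plan is to apply the direct method of the calculus of variations to the associated energy functional
$$
I(u)=\int_{\Omega}\Phi(|\nabla u|)\,dx-\int_{\Omega}F(u)\,dx
$$
on the Orlicz-Sobolev space $W^{1,\Phi}_0(\Omega)$. First I would show that $I$ is bounded below and coercive in the sense of the modular. By $(f_3)$, for every $u\in W^{1,\Phi}_0(\Omega)$,
$$
\int_{\Omega}F(u)\,dx\leq b_1\int_{\Omega}\Phi(u/d)^{s}\,dx,
$$
with $s\in(0,1)$, while a Poincaré-type inequality in the Orlicz framework (using that $d$ is twice the diameter of $\Omega$) dominates $\int_{\Omega}\Phi(u/d)\,dx$ by $\int_{\Omega}\Phi(|\nabla u|)\,dx$. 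Since $s<1$, Young's inequality then yields
$\int_{\Omega}F(u)\,dx\leq \tfrac{1}{2}\int_{\Omega}\Phi(|\nabla u|)\,dx+C_1$, so $I(u)\geq\tfrac{1}{2}\int_{\Omega}\Phi(|\nabla u|)\,dx-C_1$. In particular $I$ is bounded from below and any minimizing sequence has bounded modular.

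Next I would prove that $c:=\inf I<0$, which forces any minimizer to be nontrivial. Fix a nonnegative $\varphi\in C^{\infty}_c(\Omega)$ with $\varphi\not\equiv 0$; for $t>0$ small enough we have $0<t\varphi<\delta$ pointwise, so $(f_4)$ gives
$$
I(t\varphi)\leq\int_{\Omega}\Phi(t|\nabla\varphi|)\,dx-c_1\int_{\Omega}\Phi(t\varphi)^{\gamma}\,dx.
$$
Choosing $A,B>0$ comparing $|\nabla\varphi|$ and $\varphi$ on their supports, assumption $(\phi_8)$ forces $\Phi(t\varphi)^{\gamma}/\Phi(t|\nabla\varphi|)\to+\infty$ uniformly on the support as $t\to 0^+$, hence $I(t\varphi)<0$ for $t$ sufficiently small, giving $c<0$.

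Now take a minimizing sequence $(u_n)\subset W^{1,\Phi}_0(\Omega)$. The modular bound on $|\nabla u_n|$ and the non-reflexivity of $L^{\Phi}$ preclude extracting a weakly convergent subsequence in the usual sense; instead, viewing $L^{\Phi}$ as the topological dual of $E^{\widetilde{\Phi}}$ (the closure of bounded functions in $L^{\widetilde{\Phi}}$), Banach-Alaoglu provides a subsequence with $\nabla u_n\stackrel{\ast}{\rightharpoonup}\nabla u$ in $L^{\Phi}(\Omega)^N$, and a standard compactness argument in $L^1(\Omega)$ (together with passing to a.e. convergence along a further subsequence) gives $u_n\to u$ a.e. in $\Omega$. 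Weak-$\ast$ lower semicontinuity of $v\mapsto \int_{\Omega}\Phi(|v|)\,dx$ yields $\int_{\Omega}\Phi(|\nabla u|)\,dx\leq\liminf_n\int_{\Omega}\Phi(|\nabla u_n|)\,dx$, while the bound $F(u_n)\leq b_1\Phi(u_n/d)^{s}$ with $s<1$ delivers equi-integrability of $\{F(u_n)\}$ through Vitali's theorem, so $\int_{\Omega}F(u_n)\,dx\to \int_{\Omega}F(u)\,dx$. Combining, $I(u)\leq c<0$, so $u\not\equiv 0$ is a global minimizer.

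The final step is to deduce that $u$ is a weak solution. This is where I expect the main obstacle: because $\Phi$ does not satisfy $\Delta_2$, the functional $u\mapsto\int_{\Omega}\Phi(|\nabla u|)\,dx$ is in general not Gateaux-differentiable on $W^{1,\Phi}_0(\Omega)$, and $C^{\infty}_c(\Omega)$ need not be norm dense. I would circumvent this by computing one-sided directional derivatives of $I$ at the minimizer along smooth test functions $v\in C^{\infty}_c(\Omega)$: convexity of $\Phi$ and $(\phi_1)$ make $t\mapsto I(u+tv)$ admit finite right and left derivatives at $0$, and the minimization property forces both to vanish. A monotone convergence/dominated convergence argument then turns these one-sided limits into the identity
$$
\int_{\Omega}\phi(|\nabla u|)\nabla u\cdot\nabla v\,dx=\int_{\Omega}f(u)v\,dx,\qquad \forall v\in C^{\infty}_c(\Omega),
$$
and a density/approximation step (truncation together with $(f_3)$ growth control) extends the identity to all $v\in W^{1,\Phi}_0(\Omega)$, completing the proof.
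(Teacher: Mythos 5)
Your overall strategy coincides with the paper's: the direct method, with coercivity coming from $(f_3)$ plus the Poincar\'e-type inequality and $s<1$, negativity of the infimum from $(f_4)$ together with $(\phi_8)$, and compactness recovered through the weak$^{*}$ topology of $L^{\Phi}$ as the dual of $E^{\Phi^*}$ plus weak$^{*}$ lower semicontinuity of the modular. Up to and including the existence of a nontrivial global minimizer $u$ with $I(u)=\inf I<0$, your argument is essentially the one in Section 4 of the paper (your Vitali/equi-integrability treatment of $\int_\Omega F(u_n)\,dx$ is if anything more explicit than the paper's appeal to weak$^{*}$ l.s.c.\ of $I$).

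The genuine gap is in your final step. You propose to differentiate $t\mapsto I(u+tv)$ one-sidedly at $t=0$ for $v\in C^\infty_c(\Omega)$ and pass to the limit by monotone/dominated convergence. Without the $\Delta_2$-condition this can fail at the very first move: for $u\in D(I)$ one only knows $\int_\Omega\Phi(|\nabla u|)\,dx<\infty$, and (e.g.\ for $\Phi(t)=(e^{t^2}-1)/2$) this does \emph{not} imply $\int_\Omega\Phi(|\nabla u|+c)\,dx<\infty$ for any $c>0$; hence $I(u+tv)$ may equal $+\infty$ for every $t\neq 0$, the minimality inequality is vacuous, and the decreasing difference quotients have no integrable majorant, so the interchange of limit and integral is not justified. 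Moreover your closing density step requires the map $v\mapsto\int_\Omega\phi(|\nabla u|)\nabla u\cdot\nabla v\,dx$ to be a bounded functional on $W^{1,\Phi}_0(\Omega)$, which needs $\phi(|\nabla u|)|\nabla u|\in L^{\Phi^*}(\Omega)$ --- precisely the property you never establish. The paper's route avoids both problems: minimality plus convexity of $Q$ (not of $t\mapsto Q(u+tv)$) gives the Szulkin-type inequality $Q(v)-Q(u)\ge J'(u)(v-u)$ for all $v$, which involves $Q(v)$ and $Q(u)$ separately; then one tests this inequality with the contractions $v=(1-\tfrac1n)v_n$ of approximations $v_n$ of $u$ (Lemma \ref{dominio}), and uses the identity $\phi(t)t^2=\Phi(t)+\Phi^*(\phi(t)t)$ together with the weak$^{*}$ l.s.c.\ result of \cite{GKMS} to conclude $\int_\Omega\Phi^*(\phi(|\nabla u|)|\nabla u|)\,dx<\infty$, i.e.\ $u\in\mathrm{dom}(\phi(t)t)$ (Proposition \ref{Lema2}); only then does Lemma \ref{Lema0} convert the variational inequality into the weak Euler--Lagrange equation. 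You correctly identified this as the main obstacle, but the mechanism you propose to overcome it does not close it; you need the $\mathrm{dom}(\phi(t)t)$ argument (or an equivalent substitute) before any directional-derivative or density step can be carried out.
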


Theorem \ref{T2} completes the study made in \cite{EGS} and \cite{GKMS}, in the sense that we have worked with a class of nonlinearity where the minimization arguments can be used, but it was not considered in the above references.

Our third result is associated with a concave-convex problem for the $\Phi$-Laplacian, which was introduced by Ambrosetti, Br\'ezis and Cerami \cite{ABC} for the Laplacian operator.  For this result, we suppose that $f$ is continuous with primitive $F$ of the form
$$
F(t)=\lambda \frac{(\Phi(t))^{\alpha}}{\alpha}+\frac{(\Phi(t))^{q}}{q}, \quad \forall t \in \mathbb{R}, \leqno{(f_5)}
$$
where $\lambda>0$, $\alpha \in (0,1)$ and $q>1$.

Our third result can be stated as below

\begin{theorem} \label{T3} Assume $(f_5)$ and $(\phi_1)-(\phi_{8})$. Then, problem $(P)$ has two nontrivial solutions for $\lambda$ small enough.
\end{theorem}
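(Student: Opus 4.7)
The plan is to adapt the classical Ambrosetti--Br\'ezis--Cerami two-solutions scheme to the non-reflexive Orlicz--Sobolev setting of this paper. Work with the energy functional
$$
I_\lambda(u) = \int_\Omega \Phi(|\nabla u|)\,dx - \int_\Omega F(u)\,dx, \qquad u \in W_0^{1,\Phi}(\Omega),
$$
whose critical points are weak solutions of $(P)$. The first solution $u_1$ is produced as a negative local minimizer of $I_\lambda$ in a small ball around the origin (the concave term $\lambda \Phi(u)^\alpha/\alpha$ drives the energy below $0$), and the second solution $u_2$ is produced by a mountain pass argument (the convex term $\Phi(u)^q/q$, with $q>1$, provides the superlinear geometry). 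The two solutions are distinct because $I_\lambda(u_1) < 0 < I_\lambda(u_2)$.

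\textbf{First solution.} Fix $\rho \in (0,1]$ and let $\overline{B}_\rho$ be the Luxemburg ball of radius $\rho$ in $W_0^{1,\Phi}(\Omega)$. Using $(\phi_3)$ to bound $\int \Phi(|\nabla u|)\,dx$ below by a power of $\|u\|$, and using \eqref{I1.1} together with $(\phi_6)$ to bound $\int F(u)\,dx$ above by appropriate powers of $\int \Phi(|\nabla u|)\,dx$, one shows $I_\lambda \ge \beta > 0$ on $\partial B_\rho$ provided $\lambda$ is small enough. On the other hand, for any fixed $\varphi \in C_c^\infty(\Omega)\setminus\{0\}$, condition $(\phi_8)$ applied pointwise with $\gamma = \alpha$ gives $(\Phi(t\varphi))^\alpha/\Phi(t|\nabla\varphi|) \to +\infty$ as $t \to 0^+$, so $I_\lambda(t\varphi) < 0$ for $t$ small, and in particular $\inf_{\overline{B}_\rho} I_\lambda < 0$. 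To attain the infimum, take a minimizing sequence, extract a weak$^\star$ convergent subsequence (using the weak$^\star$ compactness of the modular ball recalled in Section~2), pass to the limit in the nonlinear term via the compact embedding \eqref{I1}, and use the convexity of $\Phi$ to obtain the weak$^\star$ lower semicontinuity of the modular. Because the minimizer $u_1$ has negative energy while $I_\lambda \ge \beta > 0$ on $\partial B_\rho$, it lies in the open ball, so $I_\lambda'(u_1) = 0$ and $u_1 \ne 0$.

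\textbf{Second solution.} Combine $I_\lambda(0) = 0$, $I_\lambda \ge \beta > 0$ on $\partial B_\rho$, and the existence of $e \in W_0^{1,\Phi}(\Omega)$ with $\|e\| > \rho$ and $I_\lambda(e) < 0$; the latter follows by taking $e = t v_0$ for fixed $v_0$ and $t$ large, since $(\phi_6)$ and $q > 1$ force $\int \Phi(t v_0)^q\,dx$ to grow faster than $\int \Phi(t|\nabla v_0|)\,dx$. This gives the mountain pass geometry with level
$$
c = \inf_{\gamma \in \Gamma}\max_{s \in [0,1]} I_\lambda(\gamma(s)) \ge \beta > 0,
$$
where $\Gamma = \{\gamma \in C([0,1],W_0^{1,\Phi}(\Omega)) : \gamma(0)=0,\ \gamma(1)=e\}$. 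For a Cerami sequence $(v_n)$ at level $c$, use the Ambrosetti--Rabinowitz-type computation of Theorem~\ref{T1}: the explicit $F$ in $(f_5)$ gives $h(t)f(t)t - \theta F(t) \ge 0$ for $\theta \le q$ and $|t|$ large, from which $\int \Phi(|\nabla v_n|)\,dx$ is bounded. Extract a weak$^\star$ limit $u_2$, pass the nonlinear term to the limit via \eqref{I1}, and use the monotonicity of $\xi \mapsto \phi(|\xi|)\xi$ (combined with a Brezis--Lieb type argument on the modular) to conclude that $u_2$ is a critical point at level $c > 0$, hence distinct from $u_1$.

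\textbf{Main obstacle.} The hardest step is the Palais-Smale/Cerami compactness in the non-reflexive space $W_0^{1,\Phi}(\Omega)$: weak convergence of gradients is unavailable, so the entire argument must be rephrased in the weak$^\star$ topology, and one must reconcile the AR-exponent $q$ from $(f_5)$ with the threshold $\theta^*$ coming from the mountain pass analysis of Theorem~\ref{T1}, which is where the smallness of $\lambda$ ultimately enters.
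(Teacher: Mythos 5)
Your architecture is the same as the paper's: a negative-energy minimizer near the origin (concave term dominating for small $t$, $\lambda$ small to keep $I_\lambda>0$ on a modular sphere), a mountain-pass solution at a positive level, verification that $(f_5)$ yields an Ambrosetti--Rabinowitz condition of type $(f_2)$ (indeed $F(t)/(h(t)tf(t))\to 1/q<1$), and distinctness by comparing energy levels. The paper obtains the first solution by running Ekeland's variational principle on the sublevel set $X=\{\int_\Omega\Phi(|\nabla u|)\,dx\le r\}$ to manufacture a $(PS)$ sequence and then invoking Proposition \ref{Lema2}, whereas you minimize directly using weak$^\star$ compactness and lower semicontinuity of the modular; both routes are viable and essentially equivalent here.

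There is, however, one genuine gap. Since $\Phi$ need not satisfy the $\Delta_2$-condition, $I_\lambda$ is not Gateaux differentiable on $W_0^{1,\Phi}(\Omega)$, so the statement ``$I_\lambda'(u_1)=0$'' is not available; the correct conclusion from local minimality (or from the mountain pass theorem in Szulkin's setting) is only the subdifferential inclusion $0\in\partial I_\lambda(u)$, i.e.\ $\int_\Omega\Phi(|\nabla v|)\,dx-\int_\Omega\Phi(|\nabla u|)\,dx\ge\int_\Omega f(u)(v-u)\,dx$ for all $v$. Passing from this inequality to the weak formulation $\int_\Omega\phi(|\nabla u|)\nabla u\nabla v\,dx=\int_\Omega f(u)v\,dx$ requires showing that $\phi(|\nabla u|)|\nabla u|\in L^{\Phi^*}(\Omega)$, i.e.\ $u\in\text{dom}(\phi(t)t)$; this is precisely the content of Proposition \ref{Lema2} and Corollary \ref{pontocritico}, proved via the perturbation $v=v_n-\tfrac1n v_n$ and the lower semicontinuity result of \cite{GKMS}, and the paper flags it as its main technical point. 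Your ``monotonicity of $\xi\mapsto\phi(|\xi|)\xi$ plus Brezis--Lieb'' sketch for the mountain-pass limit also silently presupposes that $Q'(u)$ exists, so the same gap recurs there. Two smaller slips: the smallness of $\lambda$ enters only in the sphere estimate $I_\lambda\ge\rho$ on $\{\int_\Omega\Phi(|\nabla u|)\,dx=r\}$ (balancing $r-\lambda r^{\alpha}/\alpha-r^{q}/q$), not in ``reconciling $q$ with $\theta^*$''; and for $I_\lambda(tv_0)\to-\infty$ you need $(\phi_6)$ with the ratio $A/B<q$ for the chosen $v_0$, the same constraint that produces $\theta^*$ in Lemma \ref{Lema1}, which is independent of $\lambda$.
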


In the proof of Theorem \ref{T3} we will use Ekeland's Variational Principle and Mountain Pass Theorem. The Theorem \ref{T3} completes the study made in \cite{MSGC}, because in that paper the authors have considered the concave-convex case for a nonlinearity $f$ of the type
$$
f(x,t)=\lambda a(x)|t|^{\alpha-2}t+b(x)|t|^{q-2}t, \quad \forall t \in \mathbb{R},
$$
where $a(x),b(x),\alpha$ and $q$ satisfy some technical conditions.

Before concluding this introduction we would like to point out that in the references above mentioned it was showed that if $0 \in \partial I(u)$ and $u$ is a minimum point of $I$, then $u$ is a weak solution of the problem, where $I$ denotes the functional energy associate with the problem and $\partial I(u)$ denotes the subdifferential of $I$ at $u$ . Here, after a careful study we have improved this information, in the sense that  we have proved that if $u$ is a critical point of $I$, which means $0 \in \partial I(u)$, then $u$ is a weak solution for problem. In our opinion this is a very important information for this class of problem, for more details see Proposition \ref{Lema2} and Corollary \ref{pontocritico} in Section 3. 

\section{Basics on Orlicz-Sobolev spaces}

In this section we recall some properties of Orlicz and Orlicz-Sobolev spaces, which can be found in \cite{Adams, RR}. First of all, we recall that a continuous function $\Phi : \mathbb{R} \rightarrow [0,+\infty)$ is a
N-function if:
\begin{description}
	\item[$(i)$] $\Phi$ is convex.
	\item[$(ii)$] $\Phi(t) = 0 \Leftrightarrow t = 0 $.
	\item[$(iii)$] $\displaystyle\lim_{t\rightarrow0}\frac{\Phi(t)}{t}=0$ and $\displaystyle\lim_{t\rightarrow+\infty}\frac{\Phi(t)}{t}= +\infty$ .
	\item[$(iv)$] $\Phi$ is even.
\end{description}
We say that a N-function $\Phi$ verifies the $\Delta_{2}$-condition, if 
\[
\Phi(2t) \leq K\Phi(t),\quad \forall t\geq t_0,
\]
for some constants $K,t_0 > 0$. In what follows, fixed an open set $\Omega \subset \mathbb{R}^{N}$ and a N-function $\Phi$, we define the Orlicz space associated with $\Phi$ as follows
\[
L^{\Phi}(\Omega) = \left\{  u \in L_{loc}^{1}(\Omega) \colon \ \int_{\Omega} \Phi\Big(\frac{|u|}{\lambda}\Big)dx < + \infty \ \ \mbox{for some}\ \ \lambda >0 \right\}.
\]
The space $L^{\Phi}(\Omega)$ is a Banach space endowed with the Luxemburg norm given by
\[
\Vert u \Vert_{\Phi} = \inf\left\{  \lambda > 0 : \int_{\Omega}\Phi\Big(\frac{|u|}{\lambda}\Big)dx \leq1\right\}.
\]
The complementary function ${\Phi}^*$ associated with $\Phi$ is given
by the Legendre's transformation, that is,
\[
{\Phi}^*(s) = \max_{t\geq 0}\{ st - \Phi(t)\}, \quad  \mbox{for} \quad s\geq0.
\]
The functions $\Phi$ and ${\Phi}^*$ are complementary each other. Moreover, we also have a Young type inequality given by
\[
st \leq \Phi(t) + {\Phi}^*(s), \quad \forall s, t\geq0.
\]
Using the above inequality, it is possible to prove a H\"older type inequality, that is,
\[
\Big| \int_{\Omega}uvdx \Big| \leq 2 \Vert u \Vert_{\Phi}\Vert v \Vert_{ \Phi^*},\quad \forall u \in L^{\Phi}(\Omega) \quad \mbox{and} \quad \forall v \in L^{\Phi^*}(\Omega).
\]
The corresponding Orlicz-Sobolev space is defined as follows
\[
W^{1, \Phi}(\Omega) = \Big\{ u \in L^{\Phi}(\Omega) \ :\ \frac{\partial u}{\partial x_{i}} \in L^{\Phi}(\Omega), \quad i = 1, ..., N\Big\},
\]
endowed with the norm
\[
\Vert u \Vert_{1,\Phi} = \Vert \nabla u \Vert_{\Phi} + \Vert u \Vert_{\Phi}.
\]

The space $W_0^{1,\Phi}(\Omega)$ is defined as the closure of $C_0^{\infty}(\Omega)$ with respect to Orlicz-Sobolev norm above.

The spaces $L^{\Phi}(\Omega)$, $W^{1, \Phi}(\Omega)$ and $W_0^{1, \Phi}(\Omega)$ are separable and reflexive, when $\Phi$ and ${\Phi}^*$ satisfy the $\Delta_{2}$- condition.

If $E^{\Phi}(\Omega)$ denotes the closure of $L^{\infty}(\Omega)$ in $L^{\Phi}(\Omega)$ with respect to the norm $\|\,\,\|_{\Phi}$, then  $L^{\Phi}(\Omega)$ is the dual space of $E^{\Phi^*}(\Omega)$, while $L^{\Phi^*}(\Omega)$ is the dual space of $E^{\Phi}(\Omega)$. Moreover, $E^{\Phi}(\Omega)$ and $E^{\Phi^*}(\Omega)$ are separable spaces and any continuous linear functional $M:E^{\Phi}(\Omega) \to \mathbb{R}$ is of the form
$$
M(v)=\int_{\Omega}v(x)g(x)\,dx \quad \mbox{for some} \quad g \in L^{\Phi^*}(\Omega).
$$
When $\Phi$ verifies the $\Delta_2$-condition, we have that $E^{\Phi}(\Omega)=L^{\Phi}(\Omega)$.

Before concluding this section, we would like to state a lemma whose proof follows directly of a result found in  Donaldson \cite[Proposition 1.1]{donaldson}.

 \begin{lemma}  \label{Estrela} Assume that $\Phi$ is a N-function and $\Phi^*$ verifies the $\Delta_2$-condition. If $(u_n) \subset W^{1,\Phi}_0(\Omega) $ is a bounded sequence, then there are a subsequence of $(u_n)$, still denoted by itself, and $u \in  W^{1,\Phi}_0(\Omega)$ such that
 $$
 u_n \stackrel{*}{\rightharpoonup} u \quad \mbox{in} \quad W^{1,\Phi}_0(\Omega)
 $$
 and
 $$
 \int_{\Omega}u_n v \,dx  \to  \int_{\Omega}u v \,dx, \quad  \int_{\Omega}\frac{\partial u_n}{\partial x_i} w \,dx  \to  \int_{\Omega}\frac{\partial u}{\partial x_i} w \,dx, \quad \forall v,w \in  E^{\Phi^*}(\Omega)=L^{\Phi^*}(\Omega).
 $$
 \end{lemma}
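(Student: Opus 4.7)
The plan is to apply the sequential Banach-Alaoglu theorem component-by-component in $L^{\Phi}(\Omega)$. Since $\Phi^{*}$ satisfies the $\Delta_{2}$-condition, one has $E^{\Phi^{*}}(\Omega)=L^{\Phi^{*}}(\Omega)$, and this Orlicz space is separable (as recorded in Section~2). By the duality $L^{\Phi}(\Omega)=(E^{\Phi^{*}}(\Omega))^{*}$, the closed balls of $L^{\Phi}(\Omega)$ are weak-$*$ sequentially compact, which is the only soft input I will need.

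Using boundedness of $(u_{n})$ in $W^{1,\Phi}_{0}(\Omega)$, the sequences $(u_{n})$ and $(\partial u_{n}/\partial x_{i})$, $i=1,\dots,N$, are each bounded in $L^{\Phi}(\Omega)$. A finite diagonal extraction then produces one subsequence, still denoted $(u_{n})$, along which $u_{n}\stackrel{*}{\rightharpoonup}u$ and $\partial u_{n}/\partial x_{i}\stackrel{*}{\rightharpoonup}v_{i}$ in $L^{\Phi}(\Omega)$ for some $u,v_{1},\dots,v_{N}\in L^{\Phi}(\Omega)$. The two integral convergences stated in the lemma are then immediate from the definition of weak-$*$ convergence, because every $v,w\in E^{\Phi^{*}}(\Omega)=L^{\Phi^{*}}(\Omega)$ is an admissible test functional.

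To identify $v_{i}$ with $\partial u/\partial x_{i}$, I would test the weak-$*$ convergence against $\varphi\in C_{c}^{\infty}(\Omega)\subset L^{\Phi^{*}}(\Omega)$ and integrate by parts,
$$
\int_{\Omega}v_{i}\varphi\,dx=\lim_{n}\int_{\Omega}\frac{\partial u_{n}}{\partial x_{i}}\varphi\,dx=-\lim_{n}\int_{\Omega}u_{n}\frac{\partial \varphi}{\partial x_{i}}\,dx=-\int_{\Omega}u\frac{\partial \varphi}{\partial x_{i}}\,dx,
$$
so $v_{i}=\partial u/\partial x_{i}$ in $\mathcal{D}'(\Omega)$ and thus $u\in W^{1,\Phi}(\Omega)$.

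The main obstacle is upgrading $u\in W^{1,\Phi}(\Omega)$ to $u\in W^{1,\Phi}_{0}(\Omega)$, since in the non-reflexive setting norm-closed subspaces need not stay closed in the weak-$*$ topology; Mazur's lemma is unavailable here. In the working hypotheses of this paper, condition $(\phi_{7})$ provides the compact embedding $W^{1,\Phi}_{0}(\Omega)\hookrightarrow C(\overline{\Omega})$, so after a further subsequence $u_{n}\to u$ uniformly on $\overline{\Omega}$; since each $u_{n}$ vanishes on $\partial\Omega$, so does $u$, and a standard extension-by-zero plus mollification argument in the smooth bounded domain $\Omega$ places $u$ in $W^{1,\Phi}_{0}(\Omega)$. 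Alternatively, Donaldson's Proposition~1.1 handles this step intrinsically inside the Orlicz-Sobolev framework, without invoking $(\phi_{7})$. Either route closes the argument, with the preceding functional-analytic extraction being entirely formal.
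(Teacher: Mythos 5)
The paper offers no argument for this lemma at all: it simply records that the statement ``follows directly'' from Donaldson's Proposition~1.1. Your functional-analytic core --- sequential Banach--Alaoglu in $L^{\Phi}(\Omega)=(E^{\Phi^*}(\Omega))^{*}$ via separability of $E^{\Phi^*}(\Omega)$, a finite diagonal extraction over the $N+1$ components, and identification of the weak-$*$ limits of $\partial u_n/\partial x_i$ with $\partial u/\partial x_i$ by testing against $C_c^{\infty}(\Omega)\subset E^{\Phi^*}(\Omega)$ --- is correct and is exactly the content hidden behind that citation, so up to the final step you have supplied what the paper leaves implicit.

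The final step, however, is where the real content lies, and your first proposed route for it does not work. Knowing $u\in W^{1,\Phi}(\Omega)\cap C(\overline{\Omega})$ with $u=0$ on $\partial\Omega$ and then ``extending by zero and mollifying'' presupposes that mollifications converge to $u$ in the Luxemburg norm; when $\Phi$ fails the $\Delta_2$-condition this is false in general. Indeed, since each $\varphi\in C_0^{\infty}(\Omega)$ has bounded gradient, any norm limit of such functions has its gradient in $E^{\Phi}(\Omega)$ (the norm closure of $L^{\infty}(\Omega)$ in $L^{\Phi}(\Omega)$), whereas a weak-$*$ limit of a bounded sequence in $L^{\Phi}(\Omega)$ has no reason to land in $E^{\Phi}(\Omega)$. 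So vanishing on the boundary is not sufficient, and this is precisely the phenomenon (norm-closed subspaces of a dual need not be weak-$*$ sequentially closed) that your own preamble flags. The weak-$*$ sequential closedness of $W^{1,\Phi}_0(\Omega)$ is exactly what Donaldson's Proposition~1.1 (and the related Gossez-type results) provide; your ``alternative'' route is therefore not an alternative but the only one of your two options that closes the argument, and with it your proof reduces, for the one genuinely delicate step, to the same citation the paper uses for the whole lemma.
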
	

The above lemma is crucial when we are working in a situation where the space $W^{1,\Phi}_0(\Omega)$ is not reflexive, for example if $\Phi(t)=(e^{t^2}-1)/2$. However, if $\Phi(t)=|t|^{p}/p$ and $p>1$, the above lemma is not necessary  because $\Phi$ satisfies the $\Delta_2$-condition, and so, $W^{1,\Phi}_0(\Omega)$ is reflexive. Here we would like to point out that the condition $(\phi_3)$ ensures that $\Phi^*$ verifies the $\Delta_2$-condition, for more details see Fukagai, Ito and Narukawa \cite{FN}. From this, we can apply the above lemma in the present paper.

\section{Mountain pass }

The main goal of this section is proving Theorem \ref{T1}, then throughout this section we assume the assumptions of this theorem. We start by recalling that the  conditions $(\phi_1)-(\phi_4)$ do not imply that  $\Phi$ satisfies the $\Delta_2$-condition, then $W_0^{1,\Phi}(\Omega)$ can be non reflexive. In the case where we lose the $\Delta_2$-condition, it is well known that there is $u \in W_0^{1,\Phi}(\Omega)$ such that 
$$
\int_{\Omega}\Phi(|\nabla u|)\,dx=+ \infty.
$$ 
However, independent of $\Delta_2$-condition, the condition $(f_1)$  always guarantees that 
$$
\left|\int_{\Omega}F(u)\,dx\right|<+\infty, \quad \forall u \in W_0^{1,\Phi}(\Omega).
$$
Having this in mind, the energy functional $I:W_0^{1,\Phi}(\Omega) \to \mathbb{R}\cup \{+\infty\}$ associated with $(P)$  given by
$$
I(u)=\int_{\Omega }\Phi(|\nabla u|)dx -\int_{\Omega}F(u)dx,
$$
is well defined.  Hereafter, we denote by $D(I) \subset W_0^{1,\Phi}(\Omega)$ the  set 
$$
D(I)=\left\{u \in W_0^{1,\Phi}(\Omega)\,:\, \int_{\Omega}\Phi(|\nabla u)\,dx<+\infty\right\}.
$$
The reader must observe that $D(I)=W_0^{1,\Phi}(\Omega)$ when $\Phi$ satisfies the  $\Delta_2$-condition.

As an immediate consequence of the above remarks, we cannot guarantee that $I$ belongs to $C^{1}(W_0^{1,\Phi}(\Omega),\mathbb{R})$. However, the functional $J:W_0^{1,\Phi}(\Omega) \to \mathbb{R} $ given by
$$
J(u)=\int_{\Omega}F(u)dx
$$
belongs to $C^{1}(W_0^{1,\Phi}(\Omega),\mathbb{R})$ with
$$
J'(u)v=\int_{\Omega}f(u)v \,dx, \quad \forall u,v \in W_0^{1,\Phi}(\Omega).
$$
This can be done using Lebesgue Convergence Theorem and the fact that $f$ is a continuous function. Related to the functional $Q:W_0^{1,\Phi}(\Omega) \to \mathbb{R} \cup\{+\infty\}$ given by
\begin{equation} \label{Q}
Q(u)=\int_{\Omega}\Phi(|\nabla u|)\,dx
\end{equation}
we know that it is continuous, strictly convex   and  l.s.c. with respect to the weak$^*$ topology. Moreover, $Q \in C^{1}(W_0^{1,\Phi}(\Omega),\mathbb{R})$ when $\Phi$ satisfies the  $\Delta_2$-condition.

From the above commentaries, in the present paper we will use a minimax method developed by Szulkin  \cite{Szulkin}. In this sense, we will say that $u \in D(I)$ is a critical point for $I$ if $0 \in \partial I(u)$, 
where 
$$
\partial I(u)=\left\{\chi \in (W_0^{1,\Phi}(\Omega))'\,:\, Q(v) - Q(u) - J'(u)(v-u)\,dx \geq \chi(v-u), \,\ \forall v \in W_0^{1,\Phi}(\Omega) \right\}
$$ 
We recall that $\partial I(u)$ is the subdifferential of $I$ at $u$. Thereby, $u \in D(I)$ is a critical point for $I$ if 
$$
Q(v)-Q(u) \geq J'(u)(v-u), \quad \forall v \in W_0^{1,\Phi}(\Omega),
$$
or equivalently
\begin{equation} \label{E1}
\int_{\Omega}\Phi(|\nabla v|)\,dx - \int_{\Omega}\Phi(|\nabla u|)\,dx \geq \int_{\Omega}f(u)(v-u)\,dx, \quad \forall v \in W_0^{1,\Phi}(\Omega).
\end{equation}

If $\Phi$ satisfies the $\Delta_2$-condition, the functional $I \in C^{1}(W_0^{1,\Phi}(\Omega),\mathbb{R})$ and the last inequality is equivalent to 
\begin{equation} \label{E1.0}
I'(u)v=0, \quad \forall v \in W_0^{1,\Phi}(\Omega),
\end{equation}
or yet
$$
\int_{\Omega}\phi(|\nabla u|)\nabla u \nabla v\,dx=\int_{\Omega}f(u)v\,dx, \quad \forall v \in W^{1,\Phi}_0(\Omega),
$$
showing that $u$ is a weak solution of $(P)$. However, when $\Phi$ does not satisfy the $\Delta_2$-condition the above conclusion is not immediate and a careful analysis must be done, for more details see Lemma \ref{Lema0} below.

Hereafter, we denote by $\|\,\,\, \|$ the usual norm in  $W_0^{1,\Phi}(\Omega)$ given by
$$
\|u\|=\inf\left\{\lambda >0\,:\, \int_{\Omega}\Phi\left(\frac{|\nabla u|}{\lambda}\right)\,dx \leq 1\right\}.
$$
Since $\Phi$ is not assumed to satisfy the $\Delta_2$-condition, we cannot claim that $\|\,\,\,\|$ is an equivalent norm to induced norm  by $W^{1,\Phi}(\Omega)$. However, it is very important to point out that we have a Poincar\'e type inequality which can be stated of the form
\begin{equation} \label{PI}
\int_{\Omega} \Phi({|u|}/{d}) \leq \int_{\Omega}\Phi(|\nabla u|)\, dx \quad \forall u \in W_0^{1,\Phi}(\Omega),
\end{equation}
where $d=2 \, diam(\Omega)$. For more details see \cite[Lemma 2.1]{GKMS}.

Hereafter, we will denote by $\text{dom}(\phi(t)t) \subset W^{1,\Phi}_0(\Omega)$ the following set
$$
\text{dom}(\phi(t)t)=\left\{ u \in W^{1,\Phi}_0(\Omega)\,:\, \int_{\Omega}\Phi^*(\phi(|\nabla u|)|\nabla u|)\,dx<\infty \right\}.
$$
As $\Phi^*$ verifies $\Delta_2$-condition, the above set can be written of the form
$$
\text{dom}(\phi(t)t)=\left\{ u \in W^{1,\Phi}_0(\Omega)\,:\, \phi(|\nabla u|)|\nabla u|) \in L^{\Phi^*}(\Omega) \right\}.
$$
The set $\text{dom}(\phi(t)t)$ is not empty, because it is easy to see that $C_0^{\infty}(\Omega) \subset \text{dom}(\phi(t)t)$ .

\begin{lemma} \label{dominio} For each $u \in D(I)$, there is a sequence $(u_n) \subset  \text{dom}(\phi(t)t)$ such that 
$$
\int_{\Omega}\Phi(|\nabla u_n|)\,dx \leq \int_{\Omega}\Phi(|\nabla u|)\,dx  \quad \mbox{and} \quad \|u-u_n\| \leq {1}/{n}.
$$	
\end{lemma}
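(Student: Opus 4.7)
The plan is to take the simple scaling $u_n := \lambda_n u$ with a sequence $\lambda_n \nearrow 1^{-}$. If $u = 0$ there is nothing to prove; otherwise, set $\lambda_n := 1 - \tfrac{1}{n(1+\|u\|)} \in (0,1)$, so that $\|u_n - u\| = (1-\lambda_n)\|u\| \le 1/n$ holds by construction. Since $\Phi$ is convex with $\Phi(0)=0$, the elementary inequality $\Phi(\lambda t) \le \lambda\, \Phi(t)$ for $\lambda \in [0,1]$ yields
$$
\int_{\Omega}\Phi(|\nabla u_n|)\, dx \;=\; \int_{\Omega}\Phi(\lambda_n|\nabla u|)\, dx \;\le\; \lambda_n \int_{\Omega}\Phi(|\nabla u|)\, dx \;\le\; \int_{\Omega}\Phi(|\nabla u|)\, dx,
$$
which is the desired energy bound. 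In particular $u_n \in W^{1,\Phi}_0(\Omega)$, since it is a scalar multiple of $u$.

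The substantive step is to verify $u_n \in \text{dom}(\phi(t)t)$. Since $\Phi'(s) = s\phi(s)$, the Young equality for the complementary pair $(\Phi, \Phi^*)$ gives the pointwise identity
$$
\Phi^*(\phi(s) s) \;=\; \Phi^*(\Phi'(s)) \;=\; s\Phi'(s) - \Phi(s) \;=\; s^2\phi(s) - \Phi(s).
$$
Because $\int_{\Omega} \Phi(\lambda_n|\nabla u|)\, dx \le \int_{\Omega} \Phi(|\nabla u|)\, dx < \infty$ is already under control, the required integrability of $\Phi^*(\phi(\lambda_n|\nabla u|)\lambda_n|\nabla u|)$ reduces to showing
$$
\int_{\Omega}(\lambda_n|\nabla u|)^2 \phi(\lambda_n|\nabla u|) \, dx \;<\; \infty.
$$
I would establish this by proving a pointwise estimate of the form $(\lambda t)^2\phi(\lambda t) \le C_\lambda(\Phi(t)+1)$ for every $\lambda \in (0,1)$, using the factorization
$$
\frac{(\lambda t)^2\phi(\lambda t)}{\Phi(t)} \;=\; \frac{(\lambda t)^2\phi(\lambda t)}{\Phi(\lambda t)} \cdot \frac{\Phi(\lambda t)}{\Phi(t)}.
$$
The first factor is dominated by $t^2\phi(t)/\Phi(t)$ thanks to the monotonicity of $s \mapsto s^2\phi(s)/\Phi(s)$ guaranteed by $(\phi_3)$; the second factor tends to $0$ as $t \to +\infty$ by $(\phi_6)$ applied with $A=\lambda$, $B=1$, $q=1$, since $A/B=\lambda<1=q$. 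The decay supplied by $(\phi_6)$ compensates any growth of the first factor, so the product is bounded; combined with continuity on compact subsets of $[0,\infty)$, this gives the pointwise estimate and hence, upon integrating against $|\nabla u|$, the required finiteness.

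The hard part is precisely this pointwise domination. In the model cases it is transparent: for $\Phi(t) = |t|^p/p$ both sides are proportional to $t^p$ and no decay from $(\phi_6)$ is even needed, while for $\Phi(t) = (e^{t^2}-1)/2$ the bound reduces to global boundedness of $t \mapsto t^2 e^{(\lambda^2-1)t^2}$ when $\lambda<1$. For a general N-function satisfying $(\phi_1)$--$(\phi_8)$, one has to balance the possibly unbounded growth of $s^2\phi(s)/\Phi(s)$ allowed by $(\phi_3)$ against the decay forced by $(\phi_6)$, and it is exactly here that the structural assumptions on $\Phi$ are genuinely used.
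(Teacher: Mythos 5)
Your overall strategy coincides with the paper's: both proofs take $u_n=\lambda_n u$ with $\lambda_n\uparrow 1$, use convexity of $\Phi$ with $\Phi(0)=0$ for the energy bound, and then must justify that a strict contraction of $u$ lands in $\text{dom}(\phi(t)t)$. (Your norm estimate $\|u-\lambda_n u\|=(1-\lambda_n)\|u\|$ via homogeneity of the Luxemburg norm is in fact cleaner than the paper's dominated-convergence argument for the same fact.) The difference is that the paper disposes of the substantive step by citation — it invokes Lemma 4.1 of Mustonen--Tienari for the membership $(1-\epsilon)u\in\text{dom}(\phi(t)t)$ — whereas you attempt to prove it directly, and it is exactly there that your argument has a genuine gap.

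The gap is this: after the correct reduction (via Young's equality $\Phi^*(s\phi(s))=s^2\phi(s)-\Phi(s)$ and the monotonicity from $(\phi_3)$) you need the product
$\frac{t^2\phi(t)}{\Phi(t)}\cdot\frac{\Phi(\lambda t)}{\Phi(t)}$
to stay bounded, and you assert that the decay of the second factor furnished by $(\phi_6)$ ``compensates any growth of the first factor.'' That is a non sequitur: $(\phi_6)$ only says $\Phi(\lambda t)/\Phi(t)\to 0$, with no rate, so it cannot by itself dominate an a priori unbounded factor $t^2\phi(t)/\Phi(t)$ (which for $\Phi(t)=(e^{t^2}-1)/2$ grows like $2t^2$). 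You flag this yourself as ``the hard part'' but never close it, so as written the proof is incomplete. The claim is nevertheless true, and the correct engine is $(\phi_3)$ alone, not $(\phi_6)$: writing $g(s)=s^2\phi(s)/\Phi(s)=s(\log\Phi)'(s)$, which is nondecreasing, one gets
$$
\log\frac{\Phi(t)}{\Phi(\lambda t)}=\int_{\lambda t}^{t}\frac{g(s)}{s}\,ds\;\geq\; g(\lambda t)\log(1/\lambda),
$$
hence
$$
(\lambda t)^2\phi(\lambda t)=g(\lambda t)\,\Phi(\lambda t)\;\leq\; g(\lambda t)\,\lambda^{g(\lambda t)}\,\Phi(t)\;\leq\;\frac{1}{e\log(1/\lambda)}\,\Phi(t),
$$
since $x\mapsto x\lambda^{x}$ is bounded on $[0,\infty)$ for $\lambda\in(0,1)$. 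Inserting this uniform pointwise bound completes your argument (and is essentially the content of the cited Mustonen--Tienari lemma). With that supplement your proof is correct and self-contained; without it, the key step rests on an unjustified assertion.
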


\begin{proof} For each $\epsilon \in (0,1] $, we know by a result found in  \cite[Lemma 4.1]{MT} that 
$v_\epsilon=(1-\epsilon) u \in \text{dom}(\phi(t)t)$.	By convexity of $\Phi$, it follows that
$$
\int_{\Omega}\Phi(|\nabla v_\epsilon|)\,dx \leq \int_{\Omega}\Phi(|\nabla u|)\,dx, \quad \forall \epsilon \in (0,1].
$$
On the other hand, we claim that 
\begin{equation} \label{D1}
v_\epsilon \to u \quad \mbox{in} \quad W^{1,\phi}_0(\Omega) \quad \mbox{as} \quad \epsilon \to 0.
\end{equation}
Indeed, fixed $\delta >0$, for all $\epsilon \in (0,\delta)$  we have 
$$
\frac{\Phi(|\nabla u - \nabla v_\epsilon|)}{\delta}= \frac{\Phi(|\epsilon\nabla u|)}{\delta} \leq \frac{\epsilon}{\delta}\Phi(|\nabla u|) \leq \Phi(|\nabla u|) \in L^{1}(\Omega).
$$
Applying the Lebesgue's Theorem, we get
$$
\int_{\Omega}\frac{\Phi(|\nabla u - \nabla v_\epsilon|)}{\delta}\,dx \to 0 \quad \mbox{as} \quad \epsilon \to 0.
$$
Then
$$
\|u-v_\epsilon \| < \delta 
$$
for $\epsilon$ small enough, showing the desired result.  	
\end{proof}

Our next lemma establishes that a critical point $u$ in the sense (\ref{E1}) is a weak solution for $(P)$ if $u \in \text{dom}(\phi(t)t)$.

\begin{lemma} \label{Lema0} Let $u \in  D(I) $ be a critical point of $I$. If $u \in \text{dom}(\phi(t)t)$, then it is a weak solution for $(P)$, that is,
$$
\int_{\Omega}\phi(|\nabla u|)\nabla u \nabla w \, dx=\int_{\Omega}f(u)w \, dx, \quad \forall w \in W_0^{1,\Phi}(\Omega).
$$	
\end{lemma}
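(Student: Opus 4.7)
The plan is to identify the two bounded linear functionals on $W_0^{1,\Phi}(\Omega)$ given by
\[
T(w) := \int_\Omega \phi(|\nabla u|)\nabla u \cdot \nabla w\, dx, \qquad F(w) := \int_\Omega f(u) w\, dx,
\]
so that the equality $T = F$ on $W_0^{1,\Phi}(\Omega)$ is precisely the weak formulation for $u$. Both are bounded: $T$ by H\"older and the hypothesis $u \in \text{dom}(\phi(t)t)$ (which gives $\phi(|\nabla u|)|\nabla u|\in L^{\Phi^*}(\Omega)$), and $F$ by the continuous embedding $W_0^{1,\Phi}(\Omega) \hookrightarrow C(\overline{\Omega})$ supplied by $(\phi_7)$. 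It therefore suffices to establish $T = F$ on a dense subclass of test functions.

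The pointwise convex subgradient inequality for $\xi \mapsto \Phi(|\xi|)$ on $\mathbb{R}^N$, whose gradient is $\phi(|\xi|)\xi$, reads
\[
\Phi(|\nabla v(x)|) - \Phi(|\nabla u(x)|) \geq \phi(|\nabla u(x)|)\nabla u(x) \cdot (\nabla v(x) - \nabla u(x)) \quad \text{a.e.\ in } \Omega.
\]
The right-hand side is in $L^1(\Omega)$ by H\"older, so integration gives $Q(v) - Q(u) \geq T(v-u)$ for every $v \in W_0^{1,\Phi}(\Omega)$ (the inequality being trivial when $Q(v) = +\infty$). Combined with the critical point condition (\ref{E1}), which reads $Q(v) - Q(u) \geq F(v-u)$, and specializing both inequalities to $v = u + tw$ with $t \in \mathbb{R}$, one sandwiches
\[
Q'_-(u;w) \leq F(w),\ T(w) \leq Q'_+(u;w),
\]
where $Q'_\pm(u;w) = \lim_{t \to 0^\pm} [Q(u+tw) - Q(u)]/t$ are the one-sided G\^ateaux derivatives, which exist in $[-\infty, +\infty]$ because $t \mapsto Q(u+tw)$ is convex.

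Thus $F(w) = T(w)$ will follow once $Q$ is shown to be G\^ateaux differentiable at $u$ along $w$ with derivative $T(w)$. The pointwise difference quotient $[\Phi(|\nabla u + t\nabla w|) - \Phi(|\nabla u|)]/t$ is non-decreasing in $t$ (pointwise convexity) and converges to $\phi(|\nabla u|)\nabla u \cdot \nabla w$ as $t \to 0$. If $u \pm t_0 w \in D(I)$ for some $t_0 > 0$, then for $t \in (-t_0, t_0) \setminus \{0\}$ this difference quotient is sandwiched in $L^1(\Omega)$ between its values at $t = \pm t_0$ and the $L^1$ candidate limit; Lebesgue's dominated convergence theorem then yields $Q'_\pm(u;w) = T(w)$, and hence $F(w) = T(w)$ for every $w$ in the class $\mathcal{D}$ of such admissible directions.

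The principal obstacle is to verify that $\mathcal{D}$ is dense in $W_0^{1,\Phi}(\Omega)$: without the $\Delta_2$-condition there exist $w \in W_0^{1,\Phi}(\Omega)$ for which $u + tw \notin D(I)$ for any $t > 0$, so $\mathcal{D}$ is in general a proper subspace. I would handle this by leaning on Lemma \ref{dominio}, whose scaling device $w \mapsto (1-1/n)w$ produces a sequence in $\text{dom}(\phi(t)t) \subset D(I)$ converging to $w$ in norm, and then refining the approximation (for instance by truncating so that convex combinations $u + tw_n$ stay in $D(I)$ for small $|t|$) so the limiting sequence lies in $\mathcal{D}$. Continuity of $T$ and $F$ on $W_0^{1,\Phi}(\Omega)$ then propagates the identity $T = F$ from $\mathcal{D}$ to all of $W_0^{1,\Phi}(\Omega)$, which is the desired weak formulation.
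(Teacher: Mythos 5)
Your overall architecture matches the paper's: the pointwise convexity inequality $Q(v)-Q(u)\ge T(v-u)$ together with the criticality inequality (\ref{E1}) pins both $T(w)$ and $F(w)$ between the one-sided derivatives $Q'_{\pm}(u;w)$, so everything reduces to two-sided G\^ateaux differentiability of $Q$ at $u$ along a dense class of directions, after which boundedness of $T$ (from $u\in\text{dom}(\phi(t)t)$ and H\"older) and of $F$ (from the embedding into $C(\overline{\Omega})$) finishes the argument exactly as in the paper's last lines. Those parts are sound.

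The gap is in your differentiability step and, consequently, in the density of the class $\mathcal D=\{w:\ u\pm t_0w\in D(I)\ \text{for some}\ t_0>0\}$. Your dominated-convergence argument needs $Q(u\pm t_0w)<+\infty$, and without the $\Delta_2$-condition this is an obstruction coming from $u$ at least as much as from $w$: for the model $\Phi(t)=(e^{t^2}-1)/2$ one has $\Phi(s+\epsilon)\sim e^{\epsilon^2}e^{2\epsilon s}\Phi(s)$ for large $s$, so $\int_\Omega\Phi(|\nabla u|+\epsilon)\,dx$ may be infinite for every $\epsilon>0$ even though $u\in D(I)\cap\text{dom}(\phi(t)t)$ (the latter only gives $\int_\Omega|\nabla u|^2e^{|\nabla u|^2}dx<\infty$, which does not dominate $\int_\Omega e^{|\nabla u|^2}e^{2\epsilon|\nabla u|}dx$). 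Hence even $w\in C_0^\infty(\Omega)$ need not lie in $\mathcal D$, and your proposed remedies --- rescaling $w$ by $(1-1/n)$ as in Lemma \ref{dominio}, or truncating $w$ --- act on the wrong variable: no modification of $w$ alone restores integrability of $\Phi(|\nabla u+t\nabla w|)$ when the problem is the borderline integrability of $\Phi(|\nabla u|)$ itself. This is precisely where the paper takes a different route: it never requires $u+tv\in D(I)$, but instead invokes the computation from \cite{GKMS} showing that $\frac{\partial Q(u)}{\partial v}$ exists and equals $\int_\Omega\phi(|\nabla u|)\nabla u\nabla v\,dx$ whenever $u$ and $v$ both belong to $\text{dom}(\phi(t)t)$ --- a condition phrased through $\Phi^*$, which does satisfy $\Delta_2$ by $(\phi_3)$, and which is exactly where the hypothesis $u\in\text{dom}(\phi(t)t)$ is consumed. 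The identity is then tested on $v\in C_0^\infty(\Omega)\subset D(I)\cap\text{dom}(\phi(t)t)$ and extended by density as in your final step. To repair your proof you must either reproduce that estimate or otherwise establish $Q'_-(u;w)=Q'_+(u;w)$ for a dense set of $w$ without assuming $u\pm t_0w\in D(I)$.
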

\begin{proof} \, By following the arguments found in Garc\'ia-Huidobro, Khoi, Man\'asevich and Schmitt \cite{GKMS}, the directional derivative $\frac{\partial Q(u)}{\partial v}$ given by
$$
\frac{\partial Q(u)}{\partial v}=\lim_{t \to 0}\frac{Q(u+tv)-Q(u)}{t}
$$
exists for all $v \in D(I) \cap \text{dom}(\phi(t)t)$ with
$$
\frac{\partial Q(u)}{\partial v}= \int_{\Omega}\phi(|\nabla u|)\nabla u \nabla v \, dx. 
$$
Since $J \in C^{1}(W_0^{1,\Phi}(\Omega),\mathbb{R})$, we must have
$$
\frac{\partial J(u)}{\partial v}= \int_{\Omega}f(u)v \, dx, \quad \forall v \in D(I) \cap \text{dom}(\phi(t)t).
$$
From this,
$$
\frac{\partial I(u)}{\partial v}=\frac{\partial Q(u)}{\partial v}-\frac{\partial J(u)}{\partial v}, \quad \forall v \in D(I) \cap \text{dom}(\phi(t)t)
$$
and so,
$$
\frac{\partial I(u)}{\partial v}=\int_{\Omega}\phi(|\nabla u|)\nabla u \nabla v \, dx- \int_{\Omega}f(u)v \, dx, \quad \forall u,v \in D(I) \cap \text{dom}(\phi(t)t).
$$
On the other hand, by (\ref{E1}),
$$
\int_{\Omega}\Phi(|\nabla u+tv|)\,dx - \int_{\Omega}\Phi(|\nabla u|)\,dx \geq t\int_{\Omega}f(u)v\,dx, \quad \forall v \in D(I) \cap \text{dom}(\phi(t)t)\quad \mbox{and} \quad t \in \mathbb{R},
$$
which leads to 
$$
\frac{\partial Q(u)}{\partial v}=\lim_{t \to 0^{+}}\frac{\displaystyle \int_{\Omega}\Phi(|\nabla u+tv|)\,dx - \int_{\Omega}\Phi(|\nabla u|)\,dx}{t}\geq \int_{\Omega}f(u)v\,dx=\frac{\partial J(u)}{\partial v}, 
$$
or equivalently,
$$
\frac{\partial I(u)}{\partial v} \geq 0, \quad \forall v \in D(I) \cap \text{dom}(\phi(t)t).
$$
Since $v$ is arbitrary and $-v \in D(I) \cap \text{dom}(\phi(t)t)$, the last inequality gives
$$
\frac{\partial I(u)}{\partial v}=0, \quad \forall v \in D(I) \cap \text{dom}(\phi(t)t),
$$
and so,
$$
\int_{\Omega}\phi(|\nabla u|)\nabla u \nabla v \, dx = \int_{\Omega}f(u)v\,dx, \quad \forall v \in D(I) \cap \text{dom}(\phi(t)t). 
$$
In particular,
$$
\int_{\Omega}\phi(|\nabla u|)\nabla u \nabla v \, dx = \int_{\Omega}f(u)v\,dx, \quad \forall v \in C_0^{\infty}(\Omega). 
$$
Now  the result follows by using the density of $C_0^{\infty}(\Omega)$ in $W_0^{1,\Phi}(\Omega)$ together with the fact that $\phi(|\nabla u|)|\nabla u| \in L^{\Phi^{*}}(\Omega)$. 
\end{proof}

The next result shows that $I$ possesses the mountain pass geometry.

\begin{lemma} \label{Lema1} The functional $I$ satisfies the mountain pass geometry, that is, \\
\noindent $(a)$ \, There are $r,\rho >0$ such that
$$
I(u) \geq \rho \quad \mbox{for} \quad \int_{\Omega}\Phi(|\nabla u|)\,dx=r.
$$
\noindent $(b)$ \, There is $e \in W_0^{1,\Phi}(\Omega)$ with $\displaystyle \int_{\Omega}\Phi(|\nabla e|)\,dx>r$ and $I(e)<0$.
\end{lemma}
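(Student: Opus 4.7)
The argument splits into two parts. For part (a), the plan is to exploit the continuous embedding (\ref{I1.1}), which forces $u$ to be small in $L^\infty$ whenever $\int_\Omega \Phi(|\nabla u|)\,dx$ is small. Given $\epsilon>0$, hypothesis $(f_1)$ furnishes $\delta>0$ such that $|F(t)|\leq \epsilon\,\Phi(t)$ whenever $|t|\leq\delta$. Choose $r>0$ so small that $\int_\Omega \Phi(|\nabla u|)\,dx = r$ combined with (\ref{I1.1}) forces $\|u\|_{C(\overline{\Omega})}\leq \delta$; then $|F(u)|\leq \epsilon\,\Phi(|u|)$ holds on $\Omega$, whence $\int_\Omega F(u)\,dx \leq \epsilon \int_\Omega \Phi(|u|)\,dx$. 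To convert $\Phi(|u|)$ into $\Phi(|\nabla u|)$, one invokes $(\phi_5)$ (producing $K>0$ with $\Phi(s)\leq K\,\Phi(s/d)$ for $s$ small) in conjunction with the Poincar\'e-type inequality (\ref{PI}). This delivers $\int_\Omega F(u)\,dx \leq \epsilon K r$, so that $I(u) \geq (1-\epsilon K)\,r =: \rho > 0$ for any $\epsilon < 1/K$.

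For part (b), the plan is to evaluate $I$ along the ray $t u_0$ as $t \to +\infty$, where $u_0$ is a hat-shaped test function. Fix a ball $B_R(x_0) \subset \Omega$ and let $u_0(x) := (R - |x - x_0|)_+$; this Lipschitz, compactly supported function belongs to $W_0^{1,\Phi}(\Omega)$, with $A := \|\nabla u_0\|_\infty = 1$ and $u_0 \geq B := R/2$ on the set $E := B_{R/2}(x_0)$ of positive measure. Set $\theta^* := A/B = 2/R$, so that the standing hypothesis $\theta > \theta^*$ of Theorem \ref{T1} translates into the separation $A/B < \theta$. Choose $M > R$ with $F(M) > 0$ (possible, since $(f_2)$ forces $F>0$ outside $[-R,R]$); by (\ref{E0}) one has $F(s) \geq C_1\,\Phi(s)^\theta$ for $s \geq M$ with $C_1 = F(M)/\Phi(M)^\theta$, while on the complementary set continuity of $F$ gives a uniform lower bound $-C_2$. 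Combining this with the trivial upper estimate $\int_\Omega \Phi(t|\nabla u_0|)\,dx \leq \Phi(tA)\,|\Omega|$, for $t\geq M/B$ one obtains
\[
I(t u_0) \leq \Phi(tA)\,|\Omega| + C_2|\Omega| - C_1\,\Phi(tB)^\theta\,|E|.
\]
Since $A/B < \theta$, hypothesis $(\phi_6)$ delivers $\Phi(tB)^\theta / \Phi(tA) \to +\infty$ as $t \to +\infty$, so the right-hand side tends to $-\infty$. Taking $t$ large enough produces $I(tu_0) < 0$ together with $\int_\Omega \Phi(t|\nabla u_0|)\,dx > r$ (automatic, since $\Phi(t|\nabla u_0|) \to +\infty$ on $\{|\nabla u_0|>0\}$), which provides the required $e$.

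The essential difficulty is in (b). When $\Phi$ fails the $\Delta_2$-condition, the Dirichlet term $t \mapsto \int_\Omega \Phi(t|\nabla u_0|)\,dx$ can grow faster than any polynomial in $t$, so mere $\Phi$-superlinearity of $F$ does not suffice to push $I$ below zero along a ray $tu_0$. The quantitative hypothesis $(\phi_6)$ is tailor-made to compare $\Phi(tB)^\theta$ against $\Phi(tA)$, but activates only when the ratio $A/B$ lies strictly below $\theta$. This forces the test function to be sufficiently \emph{flat} and thereby couples the threshold $\theta^*$ to the geometry of $\Omega$ (through the radius $R$, which can be chosen as large as the inradius of $\Omega$).
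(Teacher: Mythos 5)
Your proof is correct and follows essentially the same route as the paper: part (a) combines $(f_1)$, the embedding \eqref{I1.1}, $(\phi_5)$ and the Poincar\'e inequality \eqref{PI} exactly as the paper does (your version is in fact slightly more careful in distinguishing the smallness threshold $\delta$ from the level $r$), and part (b) reproduces the paper's argument of bounding $F(t)\geq C_1\Phi(t)^{\theta}-C_2$ via \eqref{E0} and invoking $(\phi_6)$ with the ratio $A/B<\theta$ for a test function with $\|\nabla\Psi\|_\infty=A$ and $\inf_{\Omega_0}\Psi=B$. Your explicit hat function merely instantiates the paper's abstract $\Psi$ and makes the threshold $\theta^*=A/B$ concrete.
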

\begin{proof} We begin recalling that by $(f_1)$, given $\epsilon >0$ there is $r>0$ such that
$$
F(t) \leq \epsilon \Phi(t) \quad \mbox{for} \quad |t| \leq r.
$$	
Combining the last inequality with (\ref{I1}), it follows that
$$
I(u) \geq \int_{\Omega}\Phi(|\nabla u|)\,dx - \epsilon \int_{\Omega}\Phi(u)\, dx, \quad \mbox{for} \quad \int_{\Omega}\Phi(|\nabla u|)\,dx=r.
$$
Now, by $(\phi_5)$ and \eqref{I1.1} there exists $C>0$ such that
$$
\Phi(u) \leq C\Phi(d^{-1} \, u) \quad \mbox{for} \quad \int_{\Omega}\Phi(|\nabla u|)\,dx =r.
$$
Using the last inequality together with Poincar\'e inequality (\ref{PI}), we get
$$
I(u) \geq (1-\epsilon C)\int_{\Omega}\Phi(|\nabla u|)\,dx=\rho , \quad \mbox{for} \quad \int_{\Omega}\Phi( |\nabla u|)\,dx=r \quad \mbox{and} \quad \rho=(1-\epsilon C)r>0,
$$
showing $(a)$.  Now, we will prove $(b)$. To this end, we set  
$\Psi \in C^{\infty}(\overline{\Omega}) \cap W^{1,\Phi}_0(\Omega) $ with
$$	
\Psi(x)  >0 \quad \forall x \in \Omega, \quad \Psi(x)=0 \quad \forall x\in \partial \Omega,
$$ 
and 
$$
\quad A=|\nabla \Psi|_{\infty, \Omega} \quad \mbox{and} \quad  B=\inf_{x \in \Omega_0 }\Psi(x) \quad (\Omega_0 \subset \subset \Omega ).
$$
By (\ref{E0}), there are $A_0,B_0 > 0$ such that
$$
F(t) \geq A_0 \Phi(t)^{\theta}-B_0, \quad \forall t \in \mathbb{R}.
$$
Hence, for any $t>0$, we mention that
$$
\begin{array}{ll}
I(t \Psi) & \leq \displaystyle \int_{\Omega}\Phi(t |\nabla \Psi|)\,dx - A_0\int_{\Omega}\Phi(t \Psi)^{\theta} \, dx + B_0|\Omega|, \\
\mbox{} & \mbox{}\\
& \leq \displaystyle \int_{\Omega}\Phi(t |\nabla \Psi|_\infty)\,dx - A_0\int_{\Omega_0}\Phi(t \Psi)^{\theta} \, dx + B_0|\Omega|,  \\
\mbox{} & \mbox{}\\
& \leq  C_1\Phi(At)-C_2(\Phi(Bt))^{\theta}+B_0|\Omega|.
\end{array}
$$
Now, fixing $\theta^*>0$ such that $\frac{A}{B}<\theta^*$ and $\theta > \theta^*$, the condition $(\phi_6)$ leads to  
$$
I(t\Psi) \to -\infty \quad \mbox{as} \quad t \to +\infty,
$$
showing $(b)$. 	

\end{proof}

\begin{remark} In the proof of the last lemma we have used the condition $(\phi_6)$, but the reader is invited to observe that it is not necessary when $\Omega$ contains a ball $B_r(x_0)$ with $r>1$, because in this case it is easy to build a function $\Psi \in C^{\infty}(\overline{\Omega}) \cap W^{1,\Phi}_0(\Omega) $ verifying
$$	
\Psi(x)  >0 \quad \forall x \in \Omega, \quad \Psi(x)=0 \quad \forall x\in \partial \Omega \quad \mbox{and} \quad 
\quad A=|\nabla \Psi|_{\infty, \Omega} < B=\inf_{x \in \Omega_0 }\Psi(x) \quad (\Omega_0 \subset \subset \Omega ).
$$ 
Using this information together with the fact that $\Phi$ is increasing for $t\geq 0$, we get
$$
I(t \Psi) \leq C_1\Phi(Bt)-C_2(\Phi(Bt))^{\theta}+B_0|\Omega| \to -\infty \quad \mbox{as} \quad t \to +\infty.
$$
\end{remark}

The next result establishes that any $(PS)$ sequence of $I$ is bounded.  We recall that $(u_n) \subset W_0^{1}(\Omega)$ is a $(PS)$ sequence at level $c \in \mathbb{R}$, if there is $\tau_n \to 0 $ such that
\begin{equation} \label{sequencia1}
I(u_n) \to c \quad \mbox{as} \quad n \to +\infty
\end{equation}
and
\begin{equation} \label{sequencia2}
\int_{\Omega}\Phi(|\nabla v|)\,dx-\int_{\Omega}\Phi(|\nabla u_n|)\,dx \geq \int_{\Omega}f(u_n)(v-u_n)\,dx- \tau_n\|v-u_n\|, \quad \forall v \in W^{1,\Phi}_0(\Omega) \quad \mbox{and} \quad n \in\mathbb{N}.
\end{equation}

In the sequel we say that $I$ satisfies the $(PS)$ condition, if any $(PS)$ sequence possesses a convergent subsequence in $W^{1,\Phi}_0(\Omega)$ in the strong topology. However, we would like point out that by (\ref{sequencia1}), if $(u_n)$ is a $(PS)$ sequence for $I$, then $(u_n) \subset D(I)$.

\begin{proposition}  ({\bf Main Proposition}) \label{Lema2} If $(u_n) \subset W_0^{1,\Phi}(\Omega)$ is a $(PS)$ sequence for $I$, then $(u_n)$ is bounded and there exists $u \in  D(I) \cap \text{dom}(\phi(t)t) $ such that for some subsequence, still denoted by itself, we have
$$
\int_{\Omega}f(u_n)v\, dx  \to \int_{\Omega}f(u_n)v\,dx \quad \forall v \in W_0^{1,\Phi}(\Omega),
$$
$$
\int_{\Omega}F(u_n)\, dx  \to \int_{\Omega}F(u)\,dx
$$
and
$$
\int_{\Omega}\Phi(|\nabla u_n |)\,dx \to \int_{\Omega}\Phi(|\nabla u |)\,dx.
$$
As a byproduct of the above limits, we derive that $u$ is a critical point of $I$ and 
$$
I(u_n) \to I(u). 
$$
\end{proposition}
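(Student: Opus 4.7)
The plan is to proceed in four stages, beginning with the delicate boundedness step which will be the main obstacle.

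\textbf{Stage 1 (Boundedness).} I would extract from the PS inequality \eqref{sequencia2} two complementary estimates. Testing with $v=0$ gives the baseline $\int_\Omega f(u_n)u_n\,dx \geq T_n - \tau_n\|u_n\|$, where $T_n := \int_\Omega \Phi(|\nabla u_n|)\,dx$. Testing instead with $v=(1-\varepsilon)u_n$, dividing by $\varepsilon>0$, and letting $\varepsilon\downarrow 0^+$ — the difference quotient $[\Phi(t)-\Phi((1-\varepsilon)t)]/\varepsilon$ being non-increasing in $\varepsilon$ and convergent to $t^2\phi(t)$, since its numerator is concave in $\varepsilon$ with value $0$ at $\varepsilon=0$ — monotone convergence promotes this to $\int_\Omega \phi(|\nabla u_n|)|\nabla u_n|^2\,dx \leq \int_\Omega f(u_n)u_n\,dx + \tau_n\|u_n\|$, so by $(\phi_3)$ we obtain $l\,T_n \leq \int_\Omega f(u_n)u_n\,dx + \tau_n\|u_n\|$. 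On the other hand, since $h(t)\leq 1/l$ by $(\phi_3)$, the weighted condition $(f_2)$ together with a split on $\{|u_n|\geq R\}$ vs.\ $\{|u_n|<R\}$ yields $\int_\Omega F(u_n)\,dx \leq (\theta l)^{-1}\int_\Omega f(u_n)u_n\,dx + C$; combining this with $I(u_n) = T_n - \int_\Omega F(u_n)\,dx \to c$ produces $\int_\Omega f(u_n)u_n\,dx \geq \theta l\,T_n - C_1$. Combining these two-sided estimates with the Luxemburg bound $\|u_n\|\leq \max(1,T_n)$, the smallness of $\tau_n$, and the standing hypothesis $\theta>\theta^*$ with $\theta^*$ large enough, a careful algebraic manipulation then forces a uniform upper bound on $T_n$, hence on $\|u_n\|$. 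The weight $h(t)=\Phi(t)/(t^2\phi(t))$ in $(f_2)$ is engineered precisely so that the absorption constant $(\theta l)^{-1}$ is strictly smaller than $1$, compensating for the absence of $\Delta_2$ (under which the classical AR trick $I(u_n)-\frac{1}{\theta}I'(u_n)u_n$ is unavailable, since $Q(\lambda u_n)=+\infty$ may occur for any $\lambda>1$).

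\textbf{Stage 2 (Extracting the weak-$*$ limit).} Boundedness of $\|u_n\|$ together with Lemma \ref{Estrela} produces a subsequence with $u_n\stackrel{*}{\rightharpoonup}u$ in $W_0^{1,\Phi}(\Omega)$, and the compact embedding \eqref{I1} promotes this to $u_n\to u$ in $C(\overline\Omega)$. Continuity of $f$ then gives $f(u_n)\to f(u)$ and $F(u_n)\to F(u)$ uniformly on $\overline\Omega$, whence immediately $\int_\Omega f(u_n)v\,dx \to \int_\Omega f(u)v\,dx$ for every $v\in W_0^{1,\Phi}(\Omega)\hookrightarrow C(\overline\Omega)$, and $\int_\Omega F(u_n)\,dx \to \int_\Omega F(u)\,dx$. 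The weak-$*$ lower semicontinuity of $Q$ forces $\int_\Omega\Phi(|\nabla u|)\,dx \leq \liminf_n T_n < \infty$, so $u\in D(I)$.

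\textbf{Stage 3 (Convergence $T_n\to\int\Phi(|\nabla u|)$ and critical point).} Testing \eqref{sequencia2} with $v=u$ gives $\int_\Omega\Phi(|\nabla u|)\,dx - T_n \geq \int_\Omega f(u_n)(u-u_n)\,dx - \tau_n\|u-u_n\|$, whose right-hand side tends to zero by Stage 2 and boundedness of $\|u-u_n\|$ against $\tau_n\to 0$. Taking $\liminf$ on both sides gives $\limsup T_n\leq \int_\Omega\Phi(|\nabla u|)\,dx$, which combined with the l.s.c.\ bound of Stage 2 yields $T_n\to\int_\Omega\Phi(|\nabla u|)\,dx$. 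For an arbitrary $v\in W_0^{1,\Phi}(\Omega)$, passing $n\to\infty$ in \eqref{sequencia2} now delivers exactly \eqref{E1}, proving that $u$ is a critical point of $I$.

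\textbf{Stage 4 ($u\in\text{dom}(\phi(t)t)$ and $I(u_n)\to I(u)$).} Repeating the monotone-convergence argument of Stage 1 on the critical-point inequality \eqref{E1} with $v=(1-\varepsilon)u$, dividing by $\varepsilon>0$ and letting $\varepsilon\downarrow 0^+$, yields $\int_\Omega \phi(|\nabla u|)|\nabla u|^2\,dx \leq \int_\Omega f(u)u\,dx < \infty$ (finite since $u\in C(\overline\Omega)$). The Young identity $\Phi^{*}(t\phi(t)) = t^2\phi(t)-\Phi(t) \leq t^2\phi(t)$ then places $\phi(|\nabla u|)|\nabla u| \in L^{\Phi^{*}}(\Omega)$, i.e.\ $u\in\text{dom}(\phi(t)t)$. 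Finally, combining Stage 3 with the convergence $\int_\Omega F(u_n)\,dx\to\int_\Omega F(u)\,dx$ from Stage 2 gives $I(u_n)\to I(u)$.
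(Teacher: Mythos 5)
Stages 2--4 of your plan are essentially sound: Stage 2 and Stage 3 coincide with the paper's argument (compact embedding into $C(\overline{\Omega})$, lower semicontinuity of $Q$ plus the test $v=u$), and your Stage 4 --- applying the $(1-\varepsilon)u$ difference-quotient trick directly to \eqref{E1} and then using $\Phi^*(t\phi(t))\le t^2\phi(t)$ --- is a legitimate and in fact shorter route to $u\in\text{dom}(\phi(t)t)$ than the paper's, which goes through the approximation Lemma \ref{dominio} and the weak$^*$ lower semicontinuity of $w\mapsto\int\phi(|\nabla w|)|\nabla w|^2$ from \cite[Lemma 3.2]{GKMS}.

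Stage 1, however, has a genuine gap. The two estimates you extract are \emph{not} two-sided: testing with $v=0$ and with $v=(1-\varepsilon)u_n$ both produce \emph{lower} bounds on $\int_\Omega f(u_n)u_n\,dx$ (namely $\ge T_n-\tau_n\|u_n\|$ and $\ge l\,T_n-\tau_n\|u_n\|$), and your use of $(f_2)$ with $h\le 1/l$ combined with $I(u_n)\to c$ produces yet another lower bound, $\int_\Omega f(u_n)u_n\,dx\ge \theta l\,T_n-C_1$. Three lower bounds on the same quantity cannot force an upper bound on $T_n$; what is needed is an \emph{upper} bound of the form $\int_\Omega f(u_n)u_n\,dx\le A\,T_n+o(\|u_n\|)+C$ with $A<\theta l$, and no such bound follows from the hypotheses --- the natural candidate, testing with $v=(1+\varepsilon)u_n$, is exactly what the failure of $\Delta_2$ forbids, since $Q((1+\varepsilon)u_n)$ may be infinite, and even formally it would only give $\int f(u_n)u_n\lesssim\int\phi(|\nabla u_n|)|\nabla u_n|^2$, which is not controlled by $T_n$ for $\Phi(t)=(e^{t^2}-1)/2$. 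The step where you replace $h(t)$ by its bound $1/l$ discards precisely the structure that saves the argument. The paper's device is to test in the direction $v_n=u_nh(u_n)=\Phi(u_n)/(u_n\phi(u_n))$: by $(\phi_4)$ one computes $\nabla v_n=S(u_n)\nabla u_n$ with $-1\le S\le 0$, so $u_n+tv_n$ stays in $D(I)$ for $t\in(0,1]$, the one-sided derivative $\frac{\partial Q(u_n)}{\partial v_n}=\int_\Omega\phi(|\nabla u_n|)|\nabla u_n|^2S(u_n)\,dx$ exists and is $\le 0$, and the $(PS)$ inequality then yields the missing \emph{upper} bound $\int_\Omega f(u_n)u_nh(u_n)\,dx\le|\tau_n|\,\|u_n\|$. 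Combined with $(f_2)$ (which is stated with the weight $h$ for exactly this purpose) this gives $\theta\int_\Omega F(u_n)\,dx\le|\tau_n|\,\|u_n\|+K$, and then $T_n=c+\int_\Omega F(u_n)\,dx+o(1)$ is bounded. You should replace your Stage 1 by this weighted test-function argument.
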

\begin{proof} Our first step is showing that any (PS) sequence $(u_n)$ is bounded. To this end, consider the sequence
$$
v_n(x)=\frac{\Phi(u_n)(x)}{u_n(x)\phi(u_n(x))}, \quad x \in \Omega.
$$	
A direct computation leads to
$$
\nabla v_n=\left[1-\frac{\Phi(u_n)}{u_n^{2}\phi(u_n)}\left[1+\frac{u_n\phi'(u_n)}{\phi(u_n)}\right]  \right] \nabla u_n,
$$
then by $(\phi_4)$,
\begin{equation} \label{E2}
|\nabla v_n| \leq |\nabla u_n| \quad \forall n \in \mathbb{N}.
\end{equation}
On the other hand, $(\phi_3)$ also gives
\begin{equation} \label{E3}
|v_n(x)| \leq \frac{1}{l} |u_n(x)| \quad \forall x \in \Omega.
\end{equation}
From (\ref{E2})-(\ref{E3}), $v_n \in D(I)$ with
$$
\int_{\Omega}\Phi(|\nabla v_n|)\,dx \leq \int_{\Omega}\Phi(|\nabla u_n|)\,dx, \quad \forall n \in \mathbb{N}.
$$
Applying (\ref{sequencia2}) with $v=u_n+tv_n$  and taking the limit as $t \to 0^+$ we get
$$
\int_{\Omega}\phi(|\nabla u_n|)\nabla u_n \nabla v_n \geq  \int_{\Omega}f(u_n)v_n- \tau_n\|v_n\| \geq \int_{\Omega}f(u_n)v_n- |\tau_n|\|u_n\|, \quad \forall n \in\mathbb{N},
$$	
that is,
$$
\frac{\partial I(u_n)}{\partial v_n} \geq - |\tau_n|\|u_n\|, \quad \forall n \in\mathbb{N}.
$$
Combining the above informations, we obtain
$$
c+1 \geq I(u_n)-\frac{1}{\theta}\frac{\partial I(u_n)}{\partial v_n }-\frac{1}{\theta}|\tau_n|\|u_n\|, \quad \forall n \in\mathbb{N},
$$
from where it follows that
$$
c +1 \geq \displaystyle  \int_{\Omega}\Phi(|\nabla u_n|)\,dx - \frac{1}{\theta}\int_{\Omega}\phi(|\nabla u_n|)|\nabla u_n|^{2}S(u_n)\,dx + \displaystyle \frac{1}{\theta}\int_{\Omega}(f(u_n)u_nh(u_n)-\theta F(u_n))\,dx -\frac{1}{\theta}|\tau_n|\|u_n\|,
$$
where
$$
h(t)=\frac{\Phi(t)}{t^{2}\phi(t)} \quad \mbox{and} \quad S(t)=1-\frac{\Phi(t)}{t^{2}\phi(t)}\left[1+\frac{t \phi'(t)}{\phi(t)} \right].
$$
From  $(f_2)$ and $(\phi_4)$, $S(t) \leq 0$ for all $t \in \mathbb{R}$, and so
$$
c+1 \geq \int_{\Omega}\Phi(|\nabla u_n|)\,dx -K-\frac{1}{\theta}|\tau_n|\|u_n\|, \forall n \in \mathbb{N},
$$
for some $K>0$. Supposing by contradiction that $(u_n)$ possesses a subsequence, still denoted by itself, satisfying
$$
\|u_n\| \to +\infty \quad \mbox{as} \quad n \to +\infty,
$$
we must have for $n$ large enough
$$
\int_{\Omega}\Phi(|\nabla u_n|)\,dx \geq \|u_n\|.
$$
Hence, for $n$ large enough
$$
c+1 \geq \left( 1-\frac{1}{\theta}|\tau_n|\right)\|u_n\|-K \to +\infty \quad \mbox{as} \quad n \to +\infty,
$$
which is a contradiction. The above analysis shows that $(u_n)$ is a bounded sequence in $W^{1,\Phi}_0(\Omega)$. Now, we will show that $(u_n)$ has a subsequence strongly convergent in $W^{1,\Phi}_0(\Omega)$. In order to do that, taking into account (\ref{I1}), there exists $u \in D(I) \cap C(\overline{\Omega})$ and a subsequence of $(u_n)$, still denoted by itself, such that
$$
u_n \to u \quad \mbox{in} \quad C(\overline{\Omega}).
$$
The last limit permits to conclude that 
$$
\int_{\Omega}f(u_n)v\, dx  \to \int_{\Omega}f(u_n)v\,dx, \quad \forall v \in W_0^{1,\Phi}(\Omega),
$$
and
\begin{equation} \label{FUNT1}
\int_{\Omega}F(u_n)\, dx  \to \int_{\Omega}F(u)\,dx.
\end{equation}
Since $(I(u_n))$ is bounded, we will suppose that for some subsequence the sequence $\displaystyle \left( \int_{\Omega}\Phi(\nabla u_n|)\,dx \right)$ has limit which will be denoted by $L$, that is, 
$$
\lim_{n \to +\infty} \int_{\Omega}\Phi(|\nabla u_n|)\,dx=L.
$$
As the functional $Q$ given in (\ref{Q}) is l.s.c. with respect to the weak$^*$ topology  we obtain
\begin{equation} \label{Z1}
\int_{\Omega}\Phi(|\nabla u|)\,dx \leq \lim_{n \to +\infty}\int_{\Omega}\Phi(|\nabla u_n|)\,dx=L.
\end{equation}
From (\ref{sequencia2}), we know that
$$
\int_{\Omega}\Phi(|\nabla v|)\,dx-\int_{\Omega}\Phi(|\nabla u_n|)\,dx \geq \int_{\Omega}f(u_n)(v-u_n)\,dx- \tau_n\|v-u_n\|, \quad \forall v \in W^{1,\Phi}_0(\Omega) \quad \mbox{and} \quad n \in\mathbb{N},
$$
from where it follows that
$$
\int_{\Omega}\Phi(|\nabla v|)\,dx-\int_{\Omega}\Phi(|\nabla u|)\,dx \geq \int_{\Omega}f(u)(v-u)\,dx, \quad \forall v \in W^{1,\Phi}_0(\Omega).
$$
From this, $u \in D(I)$ and it is a critical point of $I$. Moreover, we also have
$$
\int_{\Omega}\Phi(|\nabla u|)\,dx-\int_{\Omega}\Phi(|\nabla u_n|)\,dx \geq \int_{\Omega}f(u_n)(u-u_n)\,dx- \tau_n\|u-u_n\|, \quad  n \in\mathbb{N}.
$$
Therefore, 
\begin{equation} \label{Z2}
\int_{\Omega}\Phi(|\nabla u|)\,dx \geq \lim_{n \to +\infty} \int_{\Omega}\Phi(|\nabla u_n|)\,dx.
\end{equation}
Combining (\ref{Z1}) with (\ref{Z2}) we get  
\begin{equation} \label{PHI1}
\lim_{n \to +\infty} \int_{\Omega}\Phi(|\nabla u_n|)\,dx=\int_{\Omega}\Phi(|\nabla u|)\,dx. 
\end{equation}
From (\ref{FUNT1}) and (\ref{PHI1}),  
$$
\lim_{n \to +\infty}I(u_n)=I(u).
$$
In the sequel, we will show that $u \in \text{dom}(\phi(t)t)$. By Lemma \ref{dominio}, there is $(v_n) \subset \text{dom}(\phi(t)t)$ such that 
$$
\|v_n-u_n\| \leq 1/n \quad \mbox{and} \quad \int_{\Omega}\Phi(|\nabla v_n|)\,dx \leq \int_{\Omega}\Phi(|\nabla u_n|)\,dx, \quad \forall n \in \mathbb{N}.
$$
Consequently, 
$$
v_n \to u \quad \mbox{in} \quad C(\overline{\Omega})
$$
and
$$
\int_{\Omega}\Phi(|\nabla v|)\,dx-\int_{\Omega}\Phi(|\nabla v_n|)\,dx \geq \int_{\Omega}f(u_n)(v-u_n)\,dx- |\tau_n| \|v-u_n\|, \quad \forall v \in W^{1,\Phi}_0(\Omega) \quad \mbox{and} \quad \forall n \in\mathbb{N}.
$$
Setting $v=v_n-\frac{1}{n}v_n$, we get
$$
\int_{\Omega}\Phi(|\nabla v_n-\frac{1}{n}\nabla v_n|)\,dx-\int_{\Omega}\Phi(|\nabla v_n|)\,dx \geq \int_{\Omega}f(u_n)(v_n-\frac{1}{n}v_n-u_n)\,dx- |\tau_n| \|v_n-\frac{1}{n}v_n-u_n\|,
$$
or equivalently
$$
\int_{\Omega}\frac{(\Phi(|\nabla v_n-\frac{1}{n}\nabla v_n|)-\Phi(|\nabla v_n|))}{-\frac{1}{n}}\,dx \leq -n\int_{\Omega}f(u_n)(v_n-u_n)\,dx+ \int_{\Omega}f(u_n)v_n \,dx+ n|\tau_n|\|v_n-u_n\|+|\tau_n|\|v_n\|.
$$
As $(u_n)$ is bounded in $W_0^{1,\Phi}(\Omega)$, $(f(u_n))$ is bounded in $L^{\infty}(\Omega)$, $(\tau_n)$ is bounded in $\mathbb{R}$ and $\|v_n-u_n\| \leq \frac{1}{n}$, it follows that the right side of the above inequality is bounded. Therefore, there is $M>0$ such that 
$$
\int_{\Omega}\frac{(\Phi(|\nabla v_n-\frac{1}{n}\nabla v_n|)-\Phi(|\nabla v_n|))}{-\frac{1}{n}}\, dx  \leq M, \quad \forall n \in \mathbb{N}.
$$ 
Since $\Phi$ is $C^{1}$, there is $\theta_n(x) \in [0,1]$ verifying
$$
\frac{\Phi(|\nabla v_n-\frac{1}{n}\nabla v_n|)-\Phi(|\nabla v_n|)}{-\frac{1}{n}}=\phi(|(1-{\theta_n(x)}/{n})\nabla v_n|)(1-{\theta_n(x)}/{n})|\nabla v_n|^{2}.
$$
Recalling that $0< 1-{\theta_n(x)}/{n}\leq 1$, we know that
$$
1-{\theta_n(x)}/{n} \geq (1-{\theta_n(x)}/{n})^{2},
$$
which leads to
$$
\int_{\Omega}\phi(|(1-{\theta_n(x)}/{n})\nabla v_n|)(1-{\theta_n(x)}/{n})^{2}|\nabla v_n|^{2}\,dx \leq M \quad \forall n \in \mathbb{N}.
$$
As $u_n \stackrel{*}{\rightharpoonup} u$ in $W^{1,\Phi}_0(\Omega)$, we also have $(1-{\theta_n(x)}/{n})v_n \stackrel{*}{\rightharpoonup} u$ in $W^{1,\Phi}_0(\Omega).$ Then, by using the fact that $\phi(t)t^{2}$ is convex, we can apply \cite[Lemma 3.2]{GKMS} to get 
$$
\liminf_{n \to +\infty}\int_{\Omega}\phi(|(1-{\theta_n(x)}/{n})\nabla v_n|)(1-{\theta_n(x)}/{n})^{2}|\nabla v_n|^{2}\,dx  \geq \int_{\Omega}\phi(|\nabla u|)|\nabla u|^{2}\,dx 
$$
and so,
$$
\int_{\Omega}\phi(|\nabla u|)|\nabla u|^{2}\,dx \leq M. 
$$
Recalling that 
$$
\phi(t)t^{2}=\Phi(t)+\Phi^*(\phi(t)t), \quad \forall t \in \mathbb{R}
$$
we have 
$$
\phi(|\nabla u|)|\nabla u|^{2}=\Phi(|\nabla u|)+\Phi^*(\phi(|\nabla u|)|\nabla u|)
$$
which leads to 
$$
\int_{\Omega}\phi(|\nabla u|)|\nabla u|^{2}\,dx=\int_{\Omega}\Phi(|\nabla u|)\,dx+\int_{\Omega}\Phi^*(\phi(|\nabla u|)|\nabla u|)\,dx.
$$
Since $\displaystyle \int_{\Omega}\phi(|\nabla u|)|\nabla u|^{2}\,dx$ and $\displaystyle \int_{\Omega}\Phi(|\nabla u|)\,dx$ are finite, we see that $\displaystyle \int_{\Omega}\Phi^*(\phi(|\nabla u|)|\nabla u|)\,dx$ is also finite, showing that $u \in \text{dom}(\phi(t)t)$, finishing the proof. 
\end{proof}

As an immediate consequence of the last proposition we have 
\begin{corollary} \label{pontocritico} Let $u \in W_0^{1,\Phi}(\Omega)$ be a  critical point of $I$, that is,  $0 \in \partial I(u)$. Then, $u$ is a weak solution of $(P)$.
\end{corollary}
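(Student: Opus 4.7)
The proof is essentially a direct application of the Main Proposition (Proposition \ref{Lema2}) together with Lemma \ref{Lema0}. The plan is to exhibit $u$ itself as a (degenerate) Palais-Smale sequence, use the Main Proposition to deduce that $u \in \text{dom}(\phi(t)t)$, and then invoke Lemma \ref{Lema0} to upgrade the critical point condition to a weak-solution identity.

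More precisely, I would first observe that the hypothesis $0 \in \partial I(u)$ forces $u \in D(I)$: indeed, the subdifferential inequality
$$
Q(v)-Q(u) \geq J'(u)(v-u), \quad \forall v \in W_0^{1,\Phi}(\Omega),
$$
only makes sense with $Q(u) < +\infty$. Taking $v \in C_0^{\infty}(\Omega)$ on the right shows the inequality is nontrivial, and picking $v = u$ confirms $I(u)$ is well defined and finite. Thus the constant sequence $u_n := u$ satisfies $I(u_n) \to I(u) =: c$, and the critical point inequality (\ref{E1}) is exactly (\ref{sequencia2}) with $\tau_n \equiv 0$. Hence $(u_n)$ is a $(PS)$ sequence for $I$ at level $c$.

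Applying Proposition \ref{Lema2} to this constant sequence, the limit (which is $u$ itself) belongs to $D(I) \cap \text{dom}(\phi(t)t)$. In particular $\phi(|\nabla u|)|\nabla u| \in L^{\Phi^*}(\Omega)$. The remaining step is then covered by Lemma \ref{Lema0}: since $u \in D(I)$ is a critical point of $I$ and lies in $\text{dom}(\phi(t)t)$, it is a weak solution of $(P)$, that is,
$$
\int_{\Omega}\phi(|\nabla u|)\nabla u \cdot \nabla w\,dx=\int_{\Omega}f(u)w\,dx, \quad \forall w \in W_0^{1,\Phi}(\Omega).
$$

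The only subtlety I would want to verify is that the Main Proposition really does apply to a constant sequence; the derivation of boundedness there uses only the $(PS)$ inequality (\ref{sequencia2}) evaluated at the particular test function $u_n + t v_n$, which produces no circular dependence and no issue when the sequence is constant. The crucial output of the Main Proposition used here is precisely the final assertion that the weak$^*$ limit lies in $\text{dom}(\phi(t)t)$, extracted from the bound on $\int_\Omega \phi(|\nabla u|)|\nabla u|^2\,dx$ together with the Young-type identity $\phi(t)t^2 = \Phi(t)+\Phi^*(\phi(t)t)$. This $\text{dom}(\phi(t)t)$ membership is the one ingredient one cannot read off directly from the subdifferential inequality, and it is exactly what bridges the gap between "critical point in the Szulkin sense" and "weak solution" when $\Phi$ fails the $\Delta_2$-condition.
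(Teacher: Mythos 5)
Your proof is correct and follows exactly the paper's route: the paper's own proof is the one-line observation that one applies Proposition \ref{Lema2} to the constant sequence $u_n = u$ (with $\tau_n \equiv 0$), the membership $u \in \text{dom}(\phi(t)t)$ being the key output, and Lemma \ref{Lema0} then yielding the weak-solution identity. You have merely made explicit the details the paper leaves implicit.
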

\begin{proof} It is enough to apply the Proposition \ref{Lema2}  with $u_n=u$ for all $n \in \mathbb{N}$.  
\end{proof}

\subsection{Proof of Theorem \ref{T1}}
\begin{proof} From Lemmas \ref{Lema1} and \ref{Lema2},  $I$ verifies the assumptions of the mountain pass theorem due to Szulkin \cite{Szulkin}. Then the mountain pass level $\beta$ of $I$ is a critical level, that is, there is $u \in W^{1,\Phi}_0(\Omega)$ such that
$$
I(u)=\beta>0 \quad \mbox{and} \quad \int_{\Omega}\Phi(|\nabla v|)\,dx - \int_{\Omega}\Phi(|\nabla u|)\,dx \geq \int_{\Omega}f(u)(v-u)\,dx, \quad \forall v \in W^{1,\Phi}_0(\Omega).
$$
Thus, by Corollary  \ref{pontocritico} $u$ is a nontrivial solution of $(P)$.
\end{proof}

\section{Global Minimization}

In this section, we intend to prove Theorem \ref{T2} by showing that $I$ has a critical point which can be obtained by global minimization.

\begin{proof} By using the definition of $I$ and $(f_3)$, we get
$$
I(u) \geq \int_{\Omega}\Phi(|\nabla u|)\,dx -b_1\int_{\Omega}(\Phi(u/d))^{s}\,dx, \quad \forall u \in W^{1,\Phi}_0(\Omega).
$$
By  H\"older's inequality and (\ref{PI}),
$$
I(u) \geq \int_{\Omega}\Phi(|\nabla u|)\,dx -C\left(\int_{\Omega}\Phi(|\nabla u|)\,dx \right)^{s}, \quad \forall u \in W^{1,\Phi}_0(\Omega).
$$
Now, as $s \in (0,1)$ and
$$
\|u\| \to +\infty \Rightarrow \int_{\Omega}\Phi(|\nabla u|)\,dx \to +\infty,
$$	
we derive
$$
I(u) \to +\infty \quad \mbox{as} \quad \|u\| \to +\infty,
$$
showing that $I$ is coercive. This fact combined with the definition of $I$ gives that $I$ is bounded from below in  $W^{1,\Phi}_0(\Omega)$. Thereby, there is $(u_n) \subset W^{1,\Phi}_0(\Omega)$ such that
$$
I(u_n) \to I_{\infty}=\inf_{u \in W^{1,\Phi}_0(\Omega)}I(u) \quad \mbox{as} \quad n \to +\infty.
$$
Consequently, by coercivity of $I$, $(u_n)$ is bounded in $W^{1,\Phi}_0(\Omega)$. Thus, by Lemma \ref{Estrela},  for some subsequence,
$$
u_n \stackrel{*}{\rightharpoonup} u \quad \mbox{in} \quad  W^{1,\Phi}_0(\Omega).
$$
Now, applying \cite[Lemma 3.2]{GKMS} and \cite{ET}, the functional $I$ is weak$^*$ lower semicontinuous, and so,
$$
\liminf_{n \to +\infty}I(u_n) \geq I(u),
$$
implying that
$$
I(u)=I_{\infty}.
$$
Therefore $u \in D(I)$ and $0 \in \partial I(u)$, from where it follows that $u$ is weak solution of $(P)$. Now, we will prove that $u \not= 0$. To this end, it is enough to show that $I_{\infty}<0$.  Fix $v \in  C^{\infty}_{0}(\Omega)$ with $v \not= 0$, and note that by $(f_4)$, if $t>0$ is small enough,
$$
F(tv(x)) \geq c_1\Phi(tv(x)), \quad \forall x \in \overline{\Omega}.
$$
Thereby,
$$
\begin{array}{ll}
I(tv) & \leq \displaystyle \int_{\Omega}\Phi(t|\nabla v|)\,dx-c_1\int_{\Omega}(\Phi(tv))^{\gamma}\,dx\\
\mbox{} & \mbox{} \\
&  \leq A_1\Phi(tA_2)-B_1(\Phi(tB_2))^{\gamma}.
\end{array}
$$
From $(\phi_8)$, we see that $I(tv)<0$ for $t$ small enough. As $I_\infty \leq I(tv)$ , it follows that $I_\infty <0$, finishing the proof.
\end{proof}	

\section{The concave and convex case}
In this section, our intention is showing the Theorem \ref{T3}. Before proving this result, we recall that in this section the energy functional $I:W^{1,\Phi}_0(\Omega) \to \mathbb{R}$ is given by
$$
I(u)=\int_{\Omega}\Phi(|\nabla u|)\, dx-\frac{\lambda}{\alpha}\int_{\Omega}(\Phi(|u|))^{\alpha}\, dx
-\frac{1}{q}\int_{\Omega}(\Phi(|u|))^{q}\, dx.
$$
\subsection{First solution}
\begin{proof}
By using H\"older and Poincar\'e inequalities,
$$
I(u) \geq \int_{\Omega}\Phi(|\nabla u|)\, dx -\frac{\lambda}{\alpha}\left(\int_{\Omega}\Phi(|\nabla u|)\, dx \right)^{\alpha}-\frac{1}{q}\left(\int_{\Omega}\Phi(|\nabla u|)\, dx\right)^{q}.
$$

From the above inequality, there are positive numbers $\lambda^*, r$ and $\rho>0$ such that
\begin{equation} \label{E4}
I(u)>\rho \quad \mbox{for} \quad \int_{\Omega}\Phi(|\nabla u|)\, dx=r, \quad  \mbox{and} \quad 0<\lambda \leq \lambda^*.
\end{equation}
Hereafter, we denote by $X \subset W^{1,\Phi}_0(\Omega)$ the following closed set
$$
X=\left\{u \in W^{1,\Phi}_0(\Omega)\,:\, \int_{\Omega}\Phi(|\nabla u|)\, dx \leq r \right\},
$$
and by $I_\infty \in [0, +\infty)$ the number
$$
I_\infty=\inf_{u \in X}I(u).
$$
Arguing as in Section 3, it is possible to ensure that there exists $w \in int(X)$ with $I(w)<0$. This information implies that
\begin{equation} \label{E5}
\inf_{u \in X}I(u) < \inf_{u \in \partial X}I(u).
\end{equation}

By Using the Ekeland's variational principle, we find a sequence $(u_n) \subset X$ verifying
\begin{equation} \label{E6}
I(u_n) \to I_\infty \quad \mbox{and} \quad I(v)-I(u_n) \geq -\frac{1}{n}\|v-u_n\| \quad \forall v \in X \setminus \{u_n\}.
\end{equation}
Since the functionals $J$ is Gateaux differentiable at $u_n$ and $Q$ is convex, we derive that there exists $\tau_n \to 0$ verifying 
$$
\int_{\Omega}\Phi(|\nabla v|)\,dx-\int_{\Omega}\Phi(|\nabla u_n|)\,dx \geq \int_{\Omega}f(u_n)(v-u_n)\,dx- \tau_n\|v-u_n\|, \quad \forall v \in W^{1,\Phi}_0(\Omega) \quad \mbox{and} \quad n \in\mathbb{N}.
$$
The above analysis gives that $(u_n)$ is a $(PS)$ sequence for $I$.

A simple computation shows that $(f_5)$ leads to
$$
\lim_{t \to +\infty}\frac{F(t)}{h(t)tf(t)}=\frac{1}{q}<1,
$$
from where it follows that condition  $(f_2)$ is verified. Thereby, arguing as in Proposition \ref{Lema2} of Section 3, functional $I$ verifies the $(PS)$ condition, and thus, there is $u \in X$ such that
$$
I(u)= I_\infty<0 \quad \mbox{and} \quad  0 \in \partial I(u).
$$
Therefore, $u$ is our first nontrivial weak solution.
\end{proof}

\subsection{Second solution}
\begin{proof}  By above arguments, we know that $f$ satisfies $(f_2)$ and $(\ref{E4})$ guarantees $I$  verifies the mountain pass geometry. Thereby, the same arguments explored in Section 3 work to show that $I$ possesses a  critical point $w \in W^{1,\Phi}_0(\Omega)$ at the mountain pass level $\beta$ of $I$, that is,
$$
I(w)=\beta>0 \quad \mbox{and} \quad 0 \in\partial I(w).
$$
Thus, $w$ is a nontrivial solution. Moreover, $w$ is not equal to first solution $u$, because $I(u)<0<I(w)$. Therefore, $w$ is our second nontrivial weak solution.
	
\end{proof}


\begin{thebibliography}{1}

\bibitem{Adams} {\sc A. Adams and J.F. Fournier}, \emph{Sobolev Spaces,} 2nd ed., Academic Press (2003).

\bibitem{ABC} {\sc A. Ambrosetti, A., H. Br\'ezis and G. Cerami,} {\em Combined effects of concave and convex
nonlinearities in some elliptic problems}, J. Func. Anal., 122 (1994) 519-543.

\bibitem{BM} {\sc M. Bocea and M. Mih\u{a}ilescu}
{\em Eigenvalue problems in Orlicz–Sobolev spaces for rapidly growing operators in divergence form}, J. Differential Equations 256 (2014), 640-657

\bibitem{chung} {\sc N. T. Chung and H. Q. Toan,} {\it On a nonlinear and non-homogeneous problem without (A-R)
type condition in Orlicz-Sobolev spaces}, Applied Mathematics and Computation 219, (2013), 7820-7829.


\bibitem{JVMLED} {\sc M. L. M Carvalho, J. V. Goncalves and E. D. da Silva}, {\it  On quasilinear elliptic problems without the Ambrosetti–Rabinowitz condition}, Journal Anal. Mat. Appl 426, (2015), 466-483.

\bibitem{MSGC} {\sc M.L.M. Carvalho,  E.D. Silva, J. V.A. Gon\c calves and C. Goulart,} {\em Concave-concave effects for crtical quasilinear elliptic problems}. arXiv 1610.04652v1

\bibitem{donaldson}{\sc T. Donaldson},{\em Nonlinear elliptic boundary value problems in Orlicz- Sobolev spaces,} J. Differential Equations 10, 507-528 (1971).

\bibitem{EGS} {\sc E. D.  Silva,  J. V.A. Gon\c calves and K. O. Silva,} {\em On strongly nonlinear eigenvalue problems in the framework on nonreflexive Orlicz-Sobolev spaces}. arXiv 1610.02662v1


\bibitem{ET}{\sc I. Ekeland and R. Temam,} {\it Convex Analysis and Variational Problems}, North
Holland, American Elsevier, New York, 1976.

\bibitem{FN}N. {\sc Fukagai, M. Ito and K. Narukawa,} {\it Positive solutions of quasilinear elliptic equations with critical Orlicz-Sobolev nonlinearity on $\mathbb{R}^{N}$, Funkcial. Ekvac. 49, no. 2, (2006) 235-267.}


\bibitem{GKMS} {\sc M. Garc\'ia-Huidobro, L. V. Khoi, R. Man\'asevich and K. Schmitt},
{\em On principal eigenvalues for quasilinear elliptic differential operators:
	an Orlicz-Sobolev space setting,} Nonlinear Differ. Equat. Appl. 6 (1999), 207-
225.

\bibitem{MR1} \textsc{M. Mihailescu and V. R\u{a}dulescu,}{\it Nonhomogeneous Neumann problems in Orlicz-Sobolev spaces,} C.R. Acad. Sci. Paris, Ser. I 346 (2008), 401-406.

\bibitem{MR2} \textsc{M. Mihailescu and V. R\u{a}dulescu,} {\it Existence and multiplicity of solutions for a quasilinear non- homogeneous problems: An Orlicz-Sobolev space setting,} J. Math. Anal. Appl. 330, no. 1, (2007) 416-432.

\bibitem{MRep} \textsc{M. Mihailescu and D. Repov\v{s},} {\it Multiple solutions for a nonlinear and non-homogeneous problem in Orlicz-Sobolev spaces,} Appl. Math. Comput. 217, no. 14, (2011) 6624-6632.

\bibitem{r.R} \textsc{V. R\u{a}dulescu and D. Repov\v{s},} {\it Partial Differential Equations with Variable Exponents.} Variational methods and qualitative analysis, Monographs and Research Notes in Mathematics. CRC Press, Boca Raton, FL, 2015.


\bibitem{MT}{\sc  V. Mustonen and M. Tienari,} {\it An eigenvalue problem for
generalized Laplacian in Orlicz-Sobolev spaces}, Proceedings of the Royal
Society of Edinburgh, 129A (1999), 153-163.


\bibitem{MugnaiPapageorgiou} {\sc D. Mugnai and N. S. Papageorgiou,} {\em  Wang's multiplicity result for superlinear $(p,q)$-equations without the Ambrosetti-Rabinowitz condition}, Trans. Amer. Math. Soc. 366, (2014), 4919-4937.

\bibitem{RR}{\sc M.N. Rao and Z.D. Ren,} \emph{Theory of Orlicz Spaces, Marcel Dekker}, New York (1985).

\bibitem{fang} { \sc Z. Tan and F. Fang,} {\em Orlicz-Sobolev versus H\"older local minimizer and
multiplicity results for quasilinear elliptic equations}, J. Math. Anal. Appl. 402, (2013), 348-370.

\bibitem{Szulkin} {\sc A. Szulkin}, {\em Minimax principle for lower semicontinuous functions and applications to nonlinear boundary value problems}, Ann. Inst. H. Poincar\'e, 3, n 2,  77 - 109 (1986).




\end{thebibliography}
\end{document}